\documentclass[12pt,oneside]{amsart}

     \usepackage[pdftex]{graphicx}
     \usepackage{sidecap}

      \usepackage{amssymb}
      \usepackage{enumerate}	
      \usepackage{pgf}
      \usepackage{tikz}
      \usepackage{graphicx}
      \usetikzlibrary{arrows}
      \usetikzlibrary{shapes,snakes,calendar,matrix,backgrounds,folding}   
      \usepackage{listings}	

      \usepackage{caption}
      \usepackage{subcaption}

      \theoremstyle{plain}
      \newtheorem{theorem}{Theorem}[section]
      \newtheorem{lemma}[theorem]{Lemma}
      \newtheorem{corollary}[theorem]{Corollary}
      \newtheorem{proposition}[theorem]{Proposition}

      \theoremstyle{definition}
      \newtheorem{definition}[theorem]{Definition}
      
      \theoremstyle{rem}
      \newtheorem{rem}[theorem]{Remark}
      
      \theoremstyle{example}
      \newtheorem{example}[theorem]{Example}

\newcommand{\norm}[1]{\left\lVert#1\right\rVert}

      \newcommand{\R}{{\mathbb R}}

      \newcommand{\N}{\mathbb{N}}

      \newcommand{\NN}{\mathbb{N}}   
        \newcommand{\RR}{\mathbb{R}}   
        \newcommand{\ZZ}{\mathbb{Z}}


\usepackage{amsmath,amssymb,verbatim,graphicx,amsthm,color,url, enumerate}
\usepackage[top=1in, bottom=1in, left=1.25in, right=1.25in]{geometry}
\usepackage{cite}
\raggedbottom

\usepackage[backref=page,colorlinks=true]{hyperref} 
\usepackage{ifthen}
\usepackage{tikz}
\usetikzlibrary{decorations.pathreplacing}
\usepackage{tikz-cd}

\renewcommand*{\backref}[1]{}
\renewcommand*{\backrefalt}[4]{\tiny
	\ifcase #1 (\textbf{NOT CITED.})%
	\or    (Cited on page~#2.)%
	\else   (Cited on pages~#2.)%
	\fi}


   
      \makeatletter
      \def\@setcopyright{}
      \def\serieslogo@{}
      \makeatother
   

\begin{document}


   \title[$\Delta$-transitivity for several transformations]{$\Delta$-transitivity for several transformations and an application to the coboundary problem}

\subjclass[2010]{37B05, 54H20}

\author{Italo Cipriano}
\address{Facultad de Matem\'{a}ticas, Pontificia Universidad Cat\'olica de Chile, Santiago, Chile}
\email{icipriano@gmail.com}
\urladdr{http://www.icipriano.org/} 

\author{Ryo Moore}
\address{Facultad de Matem\'{a}ticas, Pontificia Universidad Cat\'olica de Chile, Santiago, Chile}
\email{rymoore@mat.puc.cl}
\urladdr{https://sites.google.com/view/ryomoore/} 

\begin{thanks}
{I.C. was partially supported by CONICYT PIA ACT172001.}
\end{thanks}

\begin{thanks}
	{R.M. was partially supported by  FONDECYT-CONICYT Postdoctorado 3170279.}
\end{thanks}

\begin{abstract}
Given a compact and complete metric space $X$ with several continuous transformations $T_1, T_2, \ldots T_H: X \to X,$ we find sufficient conditions for the existence of a point $x\in X$ such that  $(x,x,\ldots,x)\in X^H$ has dense orbit for the transformation $$\mathcal T:=T_1\times T_2\times\cdots\times T_H.$$

We use these conditions together with Liv\v{s}ic theorem, to obtain that for $\alpha$-H\"older maps $f_1,f_2,\ldots,f_H: X\to \R,$ the product $\prod_{i=1}^H f_i(x_i)$ is a smooth coboundary with respect to $\mathcal T$ is equivalent to the existence of a non-empty open subset $U \subset X$ such that
		$$\sup_{N} \sup_{x\in U}\left| \sum_{j=0}^{N} \prod_{i=1}^H f_i (T_i^{j} x) \right| < \infty.$$
\end{abstract}       
   \date{\today}
   \maketitle

\section{Introduction}
Let $(X, d)$ be a compact metric space, and $T_i: X \to X$ a continuous transformation (not necessarily all homeomorphisms) for $i = 1, 2, \ldots H$ for some integer $H \geq 2$. There are two items of interests in this note. Firstly, we will be interested in the properties of these maps for which the set
\[E_\Delta := \{x \in X: \text{ The set } \{(T_1^nx, T_2^nx, \ldots, T_H^nx): n \in \NN \} \text{ is dense in } X \} \]
is non-empty; in such case, we say the product map $T_1 \times T_2 \times \cdots \times T_H$ to be \textit{$\Delta$-transitive}. This topological multiple recurrence problem was originally studied by E. Glasner in \cite{Glasner_structure} when each transformation is a power of a single transformation (i.e. $T_i = T^i$, where $T$ is a continuous transformation on $X$). Later, T. K. S. Moothathu simplified the methods of Glasner \cite{Moothathu_diagonal}, and it was studied further by D. Kwietniak and P. Oprocha \cite{KwOp_mixing}. 

Generally, some variant of transitivity properties are assumed for each function, such as weak mixing, topological mixing, and/or minimality. It was shown by Moothathu that there exists a topologically mixing system for which the set $E_\Delta$ is empty \cite[Proposition 3]{Moothathu_diagonal}. Many of the previous studies on this topic has focused for the power of single transformations. In 2016, W. Huang, S. Shao, and X. Ye worked on this problem for several minimal and topologically weak mixing homeomorphisms that generate a nilpotent group \cite{HSY_topological}. We also note that I. Assani showed that if the maps $T_i$'s are commuting (measure theoretically) weakly mixing homeomorphisms that preserve a common probability measure $\mu$ for which the measure of every non-empty open set is positive, then $\mu(E_\Delta)$ equals to one; see the proof of \cite[Theorem 1]{Assani_v1}. One of the purpose of this note is to identify a sufficient condition for which the set $E_\Delta$ is nonempty without assuming every transformation is homeomorphism; in doing so, we consider additional topological conditions.

The second interest of this note is to obtain a Liv\v{s}ic type result for nonconventional ergodic sums (i.e. the sum that appears in the item (1) of Theorem \ref{AssaniThm}). In 2018, I. Assani announced a preprint with the following result:

\begin{theorem}[{\cite[Corollary 1.6 and the remark]{Assani_coboundary}}]\label{AssaniThm}
	Let $(X, \mu, T_1, T_2, \ldots, T_H)$ be a measure preserving system, where  $T_i$ is bi-measurable for each $1 \leq i \leq H$, and $f_1, f_2, \ldots, f_H \in L^\infty(\mu)$. The following statements are equivalent.
	\begin{enumerate}
		\item We have
		\[\sup_{N} \norm{ \sum_{n=1}^N \prod_{i=1}^H f_i \circ T_i^n }_{L^\infty(\mu)} < \infty. \]
		\item There exists a real-valued function $V$ on $X^H$ such that the map $x \mapsto V \circ \Phi^j(x, x, \ldots, x)$ is essentially bounded and $\mu$-measurable for every $j \in \ZZ$, and
		\[\prod_{i=1}^H f_i(x) = V(x, x, \ldots, x) - V \circ \Phi(x, x, \ldots, x) \text{ for $\mu$-a.e. } x \in X   \]
	\end{enumerate}
\end{theorem}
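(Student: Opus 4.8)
The plan is to prove the two implications separately; $(2)\Rightarrow(1)$ is a telescoping identity, and the substance lies in $(1)\Rightarrow(2)$, which I would settle by writing the transfer function down explicitly. First the set-up: let $\Delta\colon X\to X^{H}$ be the diagonal embedding $\Delta(x)=(x,\ldots,x)$, put $F(x_{1},\ldots,x_{H})=\prod_{i=1}^{H}f_{i}(x_{i})$ and $\Phi=T_{1}\times\cdots\times T_{H}$, so that $\prod_{i=1}^{H}f_{i}(T_{i}^{n}x)=F(\Phi^{n}\Delta x)$ and the sum in $(1)$ is the Birkhoff sum of $F$ for $\Phi$ read off along the $\Phi$-orbit of the diagonal point $\Delta x$. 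Let $S_{j}$ ($j\in\ZZ$) be the partial sums of this cocycle, normalised by $S_{0}\equiv 0$ and $S_{j+1}-S_{j}=\prod_{i=1}^{H}f_{i}\circ T_{i}^{j}$; forming the negative indices is the one place where bi-measurability of the $T_{i}$ enters. Then $(1)$ amounts to $\sup_{j\geq 1}\norm{S_{j}}_{L^{\infty}(\mu)}<\infty$ (up to the single bounded summand $\prod_{i=1}^{H}f_{i}$), whereas for $j\leq 0$ each $S_{j}$ is a finite sum of bounded functions and so already lies in $L^{\infty}(\mu)$. Granting $(2)$, unwinding the coboundary relation along the orbit of the diagonal telescopes $\sum_{n=1}^{N}\prod_{i=1}^{H}f_{i}(T_{i}^{n}x)$ to $V\circ\Phi\circ\Delta(x)-V\circ\Phi^{N+1}\circ\Delta(x)$, whose $L^{\infty}(\mu)$ norm is controlled by the hypothesis on the traces $x\mapsto V(\Phi^{j}\Delta x)$; this yields $(1)$.

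For $(1)\Rightarrow(2)$ I would take the transfer function to be the cocycle partial sum: define $V$ on the orbit $\mathcal{O}:=\bigcup_{j\in\ZZ}\Phi^{j}(\Delta X)$ of the diagonal by $V(\Phi^{j}\Delta x):=-S_{j}(x)$, and $V:=0$ off $\mathcal{O}$. Granting that this prescription is consistent — the one real difficulty, discussed below — the verification is immediate: $V\circ\Phi^{j}\circ\Delta=-S_{j}$ lies in $L^{\infty}(\mu)$ for every $j\in\ZZ$ (uniformly for $j\geq 1$ by $(1)$, automatically for $j\leq 0$) and is $\mu$-measurable because $S_{j}$ is; moreover $V\circ\Delta-V\circ\Phi\circ\Delta=-S_{0}+S_{1}=\prod_{i=1}^{H}f_{i}$ $\mu$-almost everywhere, and in fact the stronger relation $F(\Phi^{j}\Delta x)=V(\Phi^{j}\Delta x)-V(\Phi^{j+1}\Delta x)$ holds along the whole orbit — this is the form of the coboundary identity used in the telescoping above. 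That $V$ is Borel on $X^{H}$ is routine: $\Delta X$ is Borel, each $\Phi^{j}$ is a Borel isomorphism of $X^{H}$, and $\Phi^{j}\circ\Delta$ has a Borel inverse onto its image.

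The delicate point, which I expect to be the heart of the argument, is the consistency of this definition of $V$ on $\mathcal{O}$: the iterated diagonals $\Phi^{j}(\Delta X)$ need not be pairwise disjoint, so whenever $\Phi^{j}\Delta x=\Phi^{j'}\Delta x'$ with $j\neq j'$ one must have $S_{j}(x)=S_{j'}(x')$ — i.e., every time the orbit of the diagonal returns to itself, the two partial sums of the cocycle are forced to agree. I would handle this in one of two ways: reduce at the outset to the case where $\Phi$ is $\Delta$-transitive, so that off a $\mu$-null set the point $\Phi^{j}\Delta x$ recovers the pair $(j,x)$ and no conflict arises — precisely the framework developed in the first part of the paper — or pass to the ergodic decomposition of $\mu^{\otimes H}$ with respect to $\Phi$ and work on each ergodic component, where the self-coincidence pattern of the diagonal orbit is rigid enough to force the compatibility. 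With that in hand, the rest — measurability, the two-sided index range, the telescoping — is bookkeeping.
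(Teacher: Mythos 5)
First, a point of comparison: the paper does not prove Theorem \ref{AssaniThm} at all --- it is quoted from Assani's preprint, and the remark following it indicates that Assani's proof runs through the \emph{diagonal-orbit measure}, a measure supported on $\bigcup_{j}\Phi^{j}(\Delta X)$, over which the transfer function is produced by a limiting (weak-$*$/Banach-limit) argument. Your proposal instead tries to write $V$ down explicitly by $V(\Phi^{j}\Delta x):=-S_{j}(x)$, and the difficulty you flag --- consistency at self-intersections of the diagonal orbit --- is not a delicate point that can be patched: the required compatibility is simply false in general and is not implied by (1). Take $T_{1}=\cdots=T_{H}=T$ an irrational rotation, $f_{1}=h-h\circ T$ with $h$ continuous and nonconstant, and $f_{2}=\cdots=f_{H}\equiv 1$, so that $\prod_{i=1}^{H}f_{i}\circ T_{i}^{n}=g\circ T^{n}$ with $g=h-h\circ T$; then (1) holds, $\Phi^{j}(\Delta X)=\Delta X$ for every $j$, and the point $\Phi^{j}\Delta x=\Delta(T^{j}x)$ has the two representations $(j,x)$ and $(0,T^{j}x)$, so your prescription demands $S_{j}(x)=h(x)-h(T^{j}x)=0$ for all $j,x$, i.e.\ $h$ constant. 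Neither of your fallback fixes rescues this: in this example the orbit of a diagonal point never leaves the diagonal, so $\Delta$-transitivity fails outright (and it is a topological notion with no meaning under the purely measure-theoretic hypotheses, so there is no ``reduction'' to it), while the ergodic decomposition of $\mu^{\otimes H}$ gives no information because the orbit of the diagonal is $\mu^{\otimes H}$-null whenever $\mu$ is non-atomic. The theorem is of course still true in this example, but via a genuinely different construction of $V$: already for a single map the standard argument takes $V$ to be a weak-$*$ cluster point of the Ces\`aro averages $\frac{1}{N}\sum_{n=1}^{N}S_{n}$, using $S_{n}-S_{n}\circ\Phi=F-F\circ\Phi^{n}$ and the uniform bound to kill the error term, and Assani's diagonal-orbit measure is precisely the device that lets this averaging argument run on $X^{H}$ along the orbit of $\Delta X$. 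An explicit orbitwise formula cannot replace it.

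A secondary gap: your proof of $(2)\Rightarrow(1)$ telescopes using the relation $F(\Phi^{j}\Delta x)=V(\Phi^{j}\Delta x)-V(\Phi^{j+1}\Delta x)$ along the whole orbit, but statement (2) as quoted only asserts the identity at $j=0$, i.e.\ on the diagonal itself; for $j\geq 1$ the points $\Phi^{j}\Delta x$ generally lie off the diagonal and (2) says nothing about $F$ there. So either you must derive the orbitwise identity from (2) (which does not follow from the diagonal-only identity plus boundedness of the traces), or you are implicitly using the stronger form of the coboundary condition from Assani's paper; as written, this direction also has a hole.
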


We note that Assani's result does not have any restrictions on each transformation aside from the fact that they preserve a common probability measure. We will show a topological analogue of such result, where we show the existence of continuous coboundary when a common invariant probability measure is absent. In particular, the $\Delta$-transitivity condition allows one to show that the boundedness of the nonconventional sum is equivalent of showing the product of the functions is a coboundary with respect to the product transformation.

\subsection*{Organization of the paper}
In section \ref{sec2} we discuss the conditions that guarantee $\Delta$-transitivity for the case we have two continuous transformations. The same is discussed in section \ref{sec3}, but for arbitrary many transformations. In section \ref{sec3}, we will discuss the coboundary problem.

\section{ $\Delta$-transitivity I}\label{sec1}

Let $X$ be a compact metric space and $T:X\to X,S:X\to X$ two continuous maps. We define the map $T\times S:X^2\to X^2,$ $(x,y)\mapsto (Tx,Sy).$ A natural problem is to characterise topologically $\Delta$-transitivity as property of $T$ and $S.$ The following theorem answer this.

\begin{theorem}\label{teo1}
Suppose that $T$ and $S$ are both weakly mixing and syndetically transitive, $T$ is homeomorphism and $T^{-1}S=ST^{-1}$ is transitive. If there exists a homeomorphism $A:X\to X,$ strongly transitive and that commutes with $T$ and $S.$ Then $T\times S$ is $\Delta$-transitive.  
\end{theorem}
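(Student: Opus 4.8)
The plan is to show that the set $E_\Delta$ is residual in $X$, which forces it to be non-empty. Fix a countable base $\{W_i\}_{i\in\NN}$ of $X$ and, for non-empty open $U,V\subseteq X$, set $\mathcal O(U,V):=\bigcup_{n\geq 0}(T^{-n}U\cap S^{-n}V)$, which is open since $T$ and $S$ are continuous. Then $E_\Delta=\bigcap_{i,j}\mathcal O(W_i,W_j)$ is a $G_\delta$ subset of the compact complete (hence Baire) space $X$, so it suffices to prove that every $\mathcal O(U,V)$ is dense. Throughout write $R:=T^{-1}S=ST^{-1}$; from $T^{-1}S=ST^{-1}$ one gets by induction $(ST^{-1})^{n}=S^{n}T^{-n}$ for all $n\geq 0$, and $R$ is transitive by hypothesis.

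First I would extract two facts from the mixing hypotheses. Since $T$ is weakly mixing, each return-time set $N_T(U,V):=\{n\geq 0:T^{n}U\cap V\neq\emptyset\}$ is thick: applying transitivity of the $(k+1)$-fold product of $T$ with itself to the sets $N_T(U,T^{-i}V)=N_T(U,V)-i$ for $0\leq i\leq k$ produces an interval of length $k+1$ inside $N_T(U,V)$. Since $S$ is syndetically transitive, each $N_S(U,V)$ is syndetic. As a thick set meets every syndetic set, $N_T(U_1,V_1)\cap N_S(U_2,V_2)\neq\emptyset$ for all non-empty open $U_1,V_1,U_2,V_2$, i.e.\ $T\times S$ is topologically transitive on $X^{2}$ (one even gets weak mixing of $T\times S$ using the weak mixing of $S$ as well, but transitivity is all I will use). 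I would also record a one-box statement: for all non-empty open $P,Q$, $\mathcal O(P,Q)\neq\emptyset$. Indeed, by transitivity of $R$ there are $n\geq 0$ and $w\in P$ with $R^{n}w\in Q$; putting $x:=T^{-n}w$ gives $T^{n}x=w\in P$ and $S^{n}x=S^{n}T^{-n}w=(ST^{-1})^{n}w=R^{n}w\in Q$.

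Next I would prove a finite-box version: for any non-empty open boxes $P_1\times Q_1,\dots,P_r\times Q_r\subseteq X^{2}$ there is a single $y\in X$ whose diagonal orbit $\{(T^{n}y,S^{n}y):n\geq 0\}$ meets each of them. This goes by induction on $r$. Put $P^{(1)}=P_1$, $Q^{(1)}=Q_1$; suppose $P^{(k)},Q^{(k)}$ are non-empty open and such that whenever $T^{m}y\in P^{(k)}$ and $S^{m}y\in Q^{(k)}$ for some $m\geq 0$, the diagonal orbit of $y$ meets $P_1\times Q_1,\dots,P_k\times Q_k$. Choose $\ell\in N_T(P^{(k)},P_{k+1})\cap N_S(Q^{(k)},Q_{k+1})$, non-empty by transitivity of $T\times S$, and set $P^{(k+1)}=P^{(k)}\cap T^{-\ell}P_{k+1}$ and $Q^{(k+1)}=Q^{(k)}\cap S^{-\ell}Q_{k+1}$; these are non-empty open, and if $T^{m}y\in P^{(k+1)}$, $S^{m}y\in Q^{(k+1)}$ then the inductive hypothesis handles the first $k$ boxes while $T^{m+\ell}y\in P_{k+1}$ and $S^{m+\ell}y\in Q_{k+1}$ handle the $(k+1)$-st. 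Finally $\mathcal O(P^{(r)},Q^{(r)})\neq\emptyset$ by the one-box statement, and any $y$ in it has the required property.

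To conclude the density of $\mathcal O(W_1,W_2)$, fix a non-empty open $O$. By strong transitivity of $A$ and compactness, $X=\bigcup_{k=0}^{M}A^{k}(O)$ for some $M$. Apply the finite-box version to the boxes $A^{k}(W_1)\times A^{k}(W_2)$, $0\leq k\leq M$, obtaining $y\in X$ and times $n_k$ with $T^{n_k}y\in A^{k}(W_1)$, $S^{n_k}y\in A^{k}(W_2)$. Pick $k^{*}$ with $y\in A^{k^{*}}(O)$, say $y=A^{k^{*}}(x)$ with $x\in O$. Since $A$ commutes with $T$ and $S$ and $A^{k^{*}}$ is injective, $A^{k^{*}}(T^{n_{k^{*}}}x)=T^{n_{k^{*}}}y\in A^{k^{*}}(W_1)$ gives $T^{n_{k^{*}}}x\in W_1$, and likewise $S^{n_{k^{*}}}x\in W_2$; hence $x\in O\cap\mathcal O(W_1,W_2)$, so $\mathcal O(W_1,W_2)$ is dense and $E_\Delta$ is residual, in particular non-empty. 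The step I expect to be the main obstacle is the finite-box version: forcing one point — whose location one cannot prescribe — to follow a prescribed itinerary through several boxes. The diagonal-orbit condition is not an orbit condition for any single transformation, and what makes it tractable is the identity $(ST^{-1})^{n}=S^{n}T^{-n}$, which reduces the one-box problem to plain transitivity of $R$, together with the weak-mixing/syndetic-transitivity input that keeps $N_T(\cdot,\cdot)\cap N_S(\cdot,\cdot)$ non-empty while the open sets are shrunk. The strongly transitive commuting homeomorphism $A$ then plays the clean, separate role of upgrading ``some point has the itinerary'' to ``some point of every open set does'', which is precisely what promotes the $G_\delta$ set $E_\Delta$ from non-empty to dense.
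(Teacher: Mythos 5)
Your proof is correct and follows essentially the same route as the paper's: your finite-box induction is the paper's Lemma \ref{lemma1} with the $A$-conjugation moved from the lemma into the choice of target boxes $A^k(W_1)\times A^k(W_2)$, your one-box step is exactly the paper's use of the transitivity of $T^{-1}S=ST^{-1}$ together with the invertibility of $T$ (via $S^nT^{-n}=(ST^{-1})^n$), and your thick-meets-syndetic argument for transitivity of $T\times S$ is Moothathu's Proposition 4, which the paper cites rather than reproves. The final step --- strong transitivity of $A$ plus compactness to cover $X$ by finitely many $A^k(O)$, then commutation and injectivity of $A^{k}$ to transfer the hitting times back to a point of $O$ --- is identical in role and execution to the paper's, so the differences are only in packaging (Baire category stated directly instead of invoking Theorem \ref{teo2}).
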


\begin{rem} Observe the following.
\begin{enumerate}
\item The theorem is still true if $T$ is not necessarily an homeomorphism, but $S=T^{k}\tilde{S}$ for some $k\geq 1,$ $\tilde{S}:X\to X$ is a continuous map that commutes with $T,$ and $T^{k-1}\tilde{S}$ is transitive.
\item The theorem is still true if $A$ is not necessarily an homeomorphism, but $A=T.$
\end{enumerate}
\end{rem}

The main ingredients in the proof are the following theorem (similar to \cite{Moothathu_diagonal}, Prop. 1) and lemma (similar to \cite{KwOp_mixing}, Lem. 3).

\begin{theorem}\label{teo2}
Let $T_i: X \to X$ be continuous maps for $1 \leq i \leq H,$ where $H\geq 2.$ The following statements are equivalent.
\begin{enumerate}
	\item Given any non-empty sets $U_0, U_1, U_2, \ldots, U_H$, there exists $n$ for which
	\[ U_0 \cap T_1^{-n}U_1  \cap T_2^{-n}U_2 \cap \cdots \cap T_H^{-n}U_H \neq \emptyset.  \]
	\item $T_1 \times T_2 \times \cdots T_H$ is $\Delta$-transitive, i.e. there exists a $G_\delta$ set $Y \subset X$ such that for any $y \in Y$, we have $\{(T_1^ny, T_2^ny, \ldots, T_H^ny) : n \in \mathbb{N} \}$ is dense in $X^{H}$.
\end{enumerate}
In addition, if one of the transformation commutes with every other transformation, then the following statement is also equivalent to the ones above:
\begin{enumerate}
	\item[(3)] There exists $x \in X$ for which the set $\{(T_1^nx, T_2^nx, \ldots, T_H^nx) : n \in \mathbb{N} \}$ is dense in $X^{H}$.
\end{enumerate}
\end{theorem}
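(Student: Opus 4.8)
The plan is to establish the cycle of implications $(2)\Rightarrow(1)$, $(1)\Rightarrow(2)$, and then the additional equivalence with $(3)$ under the commuting hypothesis.

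For $(2)\Rightarrow(1)$: take a point $y$ in the dense-orbit $G_\delta$ set $Y$. Given non-empty open sets $U_0,U_1,\dots,U_H$, since the orbit $\{(T_1^ny,\dots,T_H^ny):n\in\NN\}$ is dense in $X^H$, there is some $m$ with $(T_1^my,\dots,T_H^my)\in U_1\times\cdots\times U_H$. But density of the orbit also forces $y$ itself to lie in the closure of $\{(T_1^ny,\dots,T_H^ny)\}$ projected suitably — more directly, density in $X^H$ implies $Y$ is dense in $X$, so we may assume $y\in U_0$ by choosing $y\in Y\cap U_0$ (which is non-empty and then still has dense $\mathcal T$-orbit). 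Then $y\in U_0\cap T_1^{-m}U_1\cap\cdots\cap T_H^{-m}U_H$, giving $(1)$.

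For $(1)\Rightarrow(2)$: this is a Baire-category argument in the style of \cite{Moothathu_diagonal}, Prop.~1. Fix a countable base $\{V_k\}_{k\ge 1}$ of $X^H$, and for each $k$ write $V_k=V_k^{(1)}\times\cdots\times V_k^{(H)}$ (it suffices to work with basic open boxes). Define
\[
W_k:=\bigcup_{n\in\NN} T_1^{-n}V_k^{(1)}\cap T_2^{-n}V_k^{(2)}\cap\cdots\cap T_H^{-n}V_k^{(H)},
\]
an open subset of $X$. Hypothesis $(1)$, applied with $U_0$ an arbitrary non-empty open set and $U_i=V_k^{(i)}$, shows that $W_k$ meets every non-empty open set, i.e.\ $W_k$ is open and dense. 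Set $Y:=\bigcap_{k\ge 1}W_k$, a dense $G_\delta$ by the Baire category theorem (here compactness gives completeness). For $y\in Y$ and any $k$, there is $n$ with $(T_1^ny,\dots,T_H^ny)\in V_k$, so the orbit of $(y,\dots,y)$ under $\mathcal T$ is dense in $X^H$; this is exactly $(2)$.

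For the final part, $(2)\Rightarrow(3)$ is immediate (pick any $y\in Y$), so the real content is $(3)\Rightarrow(1)$ or $(3)\Rightarrow(2)$ under the hypothesis that some $T_{i_0}$ commutes with every $T_i$. I expect this to be the main obstacle, and the idea (following \cite{KwOp_mixing}, Lem.~3) is: given $x$ with dense diagonal orbit and given $U_0,\dots,U_H$, use density to find $m$ with $T_i^m x\in U_i$ for all $i\ge 1$; separately we want $x$ — or a forward image of it that still has dense diagonal orbit — to sit in $U_0$. Because $T_{i_0}$ commutes with every $T_j$, the point $x':=T_{i_0}^{\ell}x$ again has dense diagonal orbit for every $\ell\ge 0$ (since $T_j^n x'=T_{i_0}^\ell T_j^n x$ and $T_{i_0}^\ell$ is continuous with the image of a dense set... — more carefully, one shows the set of points with dense diagonal orbit is $\mathcal T$-invariant and also invariant under the diagonal action of $T_{i_0}$, hence dense, so it meets $U_0$). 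Then taking such a point in $U_0$ and applying the density of its orbit yields the required $n$, establishing $(1)$. Closing these commutation bookkeeping details — in particular checking that the diagonal $T_{i_0}$-action preserves the set of $\Delta$-transitive points and that this set is therefore dense — is the delicate step; everything else is routine Baire category.
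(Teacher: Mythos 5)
Your argument follows the paper's proof essentially step for step: (1)$\Rightarrow$(2) via the same Baire-category intersection over a countable base of product boxes, (2)$\Rightarrow$(1) by choosing a transitive point in $Y\cap U_0$, and (3)$\Rightarrow$(1) by using the commuting map $T_{i_0}$ to push the transitive point into $U_0$, which is exactly how the paper argues. The ``delicate step'' you flag --- that $T_{i_0}^{k}x$ still has a dense diagonal orbit, so that the set of $\Delta$-transitive points is dense and meets $U_0$ --- is precisely the one-line assertion the paper itself makes at that point (justified only by the commutation hypothesis), so your sketch is correct and at the same level of detail as the published proof.
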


\begin{proof}
	Assume that (1) is satisfied. Let $\{B_k:k\in\NN\}$ be a countable base of open balls of $X.$ If
	$$
	Y=\bigcap_{(k_1,\ldots,k_H)\in\NN^H}\bigcup_{n\in \N}\bigcap_{i=1}^H T_i^{-n}(B_{k_i}).
	$$
	Then, by the Baire Category Theorem, $Y$ is a dense $G_{\delta}$ subset of $X,$ and by construction, every $x\in Y$ satisfies (2).
	
	Now assume that (2) is satisfied. Then, for every $x\in Y\cap U_0$ there exists $n\in \N$ such that $(T_1^nx, T_2^nx, \ldots , T_H^nx)\in U_1\times U_2\times \cdots \times U_H,$ then $\cap_{i=0}^H T_i^{-n}(U_i)\neq \emptyset.$
	
	To show that (3) implies (1), suppose that there exists $i$ such that $T_i$ commutes with all $T_j$ for $1\leq j\leq H$ and $A=\NN.$ Choose $k\in\NN$ such that $y=T_i^{k}(x)\in U \cap Y,$ then \[ \{(T_1^ny, T_2^ny, \ldots, T_H^ny): n \in \NN \} \] is dense in $X^{H},$ because $T_i$ commutes with all $T_j$ for $1\leq j\leq H.$ In particular, there exists $n\in\NN$ such that $(T_1^ny, T_2^ny, \cdots, T_H^ny)\in U_1\times U_2\times \cdots \times U_H,$ then $\cap_{i=0}^H T_i^{-n}(U_i)\neq \emptyset.$
\end{proof}

\begin{rem} It is clear that (2) implies (1). Under no commutative assumptions (1) implies (2) and (2) implies (3).
\end{rem}

\begin{lemma}\label{lemma1}
Suppose that $T:X\to X$ and $S:X\to X$  are continuous, $A: X\to X$ is a homeomorphism and that $T\times S:X^2\to X^2$ is topologically transitive. If $V_T$ and $V_S$ are nonempty open subsets of $X.$ Then there exists a sequence of integers $\{k_n\}_{n=0}^{\infty}$ such that $k_n>n$ for $n\geq 0$ and there exist sequences of nonempty open sets $\{V_T^{(n)}\}_{n=0}^{\infty}$ and $\{V_S^{(n)}\}_{n=0}^{\infty}$ such that 
for every $n\geq 0$ 
$$V_T^{(n)}\subset V_T, V_S^{(n)}\subset V_S,$$
$$
\begin{aligned}
T^{k_j} A^{-j} (V_T^{(n)})&\subset V_T \mbox{ for every }j=0,\ldots,n, \mbox{ and }\\
S^{k_j} A^{-j}(V_S^{(n)})&\subset V_S \mbox{ for every }j=0,\ldots,n.
\end{aligned}
$$ 
\end{lemma}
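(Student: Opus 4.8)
The plan is to construct the integers $k_n$ and the open sets $V_T^{(n)}, V_S^{(n)}$ by induction on $n$, the decisive point being that topological transitivity of the \emph{product} map $T\times S$ furnishes, at each stage, a single time that works simultaneously in the $T$-coordinate and the $S$-coordinate.

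First I would record the following consequence of transitivity: if $T\times S$ is topologically transitive on the compact metric space $X^2$, then for any nonempty open $U,W\subseteq X^2$ and any $N\in\NN$ there is $m>N$ with $(T\times S)^m(U)\cap W\neq\emptyset$. Indeed, the set of points of $X^2$ with dense $(T\times S)$-orbit is a dense $G_\delta$ by Baire, and the forward orbit of such a point chosen inside $U$ meets $W$ infinitely often; here one uses that $X^2$ has no isolated points, which may be assumed since otherwise transitivity forces $X^2$ (hence $X$) to be a single periodic orbit and the lemma is checked directly. Applied to product sets, this says: given $N\in\NN$ and nonempty open $U_T,U_S,W_T,W_S\subseteq X$, there are $m>N$ and points $p\in U_T$, $q\in U_S$ with $T^m p\in W_T$ and $S^m q\in W_S$.

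For the base case $n=0$, I would apply this with $U_T=W_T=V_T$, $U_S=W_S=V_S$, $N=0$, obtaining $k_0>0$ and $(p_0,q_0)\in V_T\times V_S$ with $T^{k_0}p_0\in V_T$, $S^{k_0}q_0\in V_S$; continuity of $T^{k_0}$ and $S^{k_0}$ then yields nonempty open $V_T^{(0)}\ni p_0$, $V_S^{(0)}\ni q_0$ with $V_T^{(0)}\subseteq V_T$, $V_S^{(0)}\subseteq V_S$, $T^{k_0}(V_T^{(0)})\subseteq V_T$, $S^{k_0}(V_S^{(0)})\subseteq V_S$, which is the statement for $n=0$ since $A^{0}=\mathrm{id}$. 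For the inductive step I would assume $k_0<\dots<k_n$ (with $k_j>j$) and nonempty open $V_T^{(n)}\subseteq V_T$, $V_S^{(n)}\subseteq V_S$ with $T^{k_j}A^{-j}(V_T^{(n)})\subseteq V_T$ and $S^{k_j}A^{-j}(V_S^{(n)})\subseteq V_S$ for $0\le j\le n$. Put $U_T:=A^{-(n+1)}(V_T^{(n)})$ and $U_S:=A^{-(n+1)}(V_S^{(n)})$, which are nonempty open since $A$ is a homeomorphism. By the fact above there are $k_{n+1}>n+1$ and $p\in U_T$, $q\in U_S$ with $T^{k_{n+1}}p\in V_T$, $S^{k_{n+1}}q\in V_S$; by continuity pick nonempty open $O_T\ni p$, $O_S\ni q$ with $O_T\subseteq U_T$, $O_S\subseteq U_S$, $T^{k_{n+1}}(O_T)\subseteq V_T$, $S^{k_{n+1}}(O_S)\subseteq V_S$, and set $V_T^{(n+1)}:=A^{n+1}(O_T)$, $V_S^{(n+1)}:=A^{n+1}(O_S)$. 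Then $V_T^{(n+1)}\subseteq A^{n+1}(U_T)=V_T^{(n)}\subseteq V_T$ (and similarly for $S$), so the constraints for $0\le j\le n$ pass to $V_T^{(n+1)}$ by monotonicity of $A^{-j}$ and $T^{k_j}$, while for $j=n+1$ one has $A^{-(n+1)}(V_T^{(n+1)})=O_T$, hence $T^{k_{n+1}}A^{-(n+1)}(V_T^{(n+1)})=T^{k_{n+1}}(O_T)\subseteq V_T$, and symmetrically for $S$; this closes the induction and produces the required sequences.

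The only genuinely delicate point is the preliminary fact — that transitivity of $T\times S$ produces hitting times beyond any prescribed bound — which is exactly what allows $k_n>n$ to be maintained throughout. Everything else is routine continuity-and-inclusion bookkeeping, and the role of the \emph{product} transitivity is precisely to deliver one common exponent $k_{n+1}$ serving both coordinates at once; note in particular that no commutation hypothesis on $A$ enters the argument.
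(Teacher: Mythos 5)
Your proof is correct and follows essentially the same induction as the paper: pull the previous sets back by $A^{-(n+1)}$, use transitivity of the product $T\times S$ to obtain one common hitting time for both coordinates, and push forward by $A^{n+1}$, with your $O_T,O_S$ playing the role of the paper's $U_T\cap T^{-k_n}V_T$ and $U_S\cap S^{-k_n}V_S$; your preliminary step justifying that hitting times can be taken larger than any prescribed bound (handling the isolated-point degeneracy) is a welcome addition, since the paper simply asserts the existence of $k_n>n$ without comment. One cosmetic point: the strict monotonicity $k_0<\cdots<k_n$ you include in the inductive hypothesis is neither maintained by your choice of $k_{n+1}>n+1$ nor needed, so it should be dropped (or enforced by requiring $k_{n+1}>\max\{n+1,k_n\}$).
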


\begin{proof}
Let $V_T$ and $V_S$ be nonempty open subsets of $X$ and let $W:=V_T\times V_S.$ The proof follows by induction on $n.$ Base case ($n=0$): By transitivity of $T\times S$ there exists $k_0\in \N$ such that $(T\times S)^{k_0}W\cap W\neq\emptyset,$ i.e., $V_T^{(0)}:=V_T\cap T^{-k_0} V_T\subset V_T$ and $V_S^{(0)}:=V_S\cap T^{-k_0} V_S\subset V_S$ are nonempty open sets with the properties needed. Indeed, 
$$
\emptyset \neq T^{k_0}(V_T^{(0)})\subset T^{k_0}(V_T) \cap T^{k_0}(T^{-k_0} V_T)\subset T^{k_0}(V_T^{(0)})\subset T^{k_0}(V_T) \cap  V_T\subset V_T
$$
and similarly for $V_S^{(0)}.$ Inductive step: Assume there exists $\{k_0,\ldots,k_{n-1}\}$ and $\{V_T^{i}\}_{i=0}^{n-1},$ $\{V_S^{i}\}_{i=0}^{n-1}$ with the properties of the lemma. We define $U_T:=A^{-n}(V_T^{(n-1)}),$ $U_S:=A^{-n}(V_S^{(n-1)})$ and $U=U_T\times U_S.$ By transitivity of $T\times S$ there exists $k_n>n$ such that $(T\times S)^{k_n} U \cap W \neq \emptyset,$ i.e., $\tilde{U}_T:=U_T\cap T^{-k_{n}} V_T$ and $\tilde{U}_S:=U_S\cap T^{-k_{n}} V_S$ are nonempty open sets. We define $V_T^{(n)}:=V_T^{(n-1)}\cap A^n T^{-k_n}  (V_T)$ and  $V_S^{(n)}:=V_S^{(n-1)}\cap A^n S^{-k_n} (V_T).$ Observe that
$$
\emptyset\neq A^{n}(\tilde{U}_T)\subset A^n (U_T)\cap A^{n}(T^{-k_{n}} V_T)\subset V_T^{(n-1)}\cap A^{n}T^{-k_{n}} (V_T)=V_T^{(n)}
$$
and similarly 
$$
\emptyset\neq A^{n}(\tilde{U}_S)\subset A^n (U_S)\cap A^{n}(S^{-k_{n}} V_S)\subset V_S^{(n-1)}\cap A^{n}S^{-k_{n}} (V_S)=V_S^{(n)}.
$$
Also,
$$
\begin{aligned}
T^{k_n}A^{-n}(V_T^{(n)})&\subset T^{k_n}A^{-n}(V_T^{(n-1)}) \cap T^{k_n}A^{-n}(A^n T^{-k_n}  (V_T))\\
&= T^{k_n}A^{-n}(V_T^{(n-1)}) \cap T^{k_n} T^{-k_n} (V_T)\\
&\subset T^{k_n}A^{-n}(V_T^{(n-1)}) \cap V_T\\
&\subset V_T,
\end{aligned}
$$
similarly for $V_S^{(n)}$ (replacing $T$ by $S$).
Finally, by definition of $V_T^{(n)}$ we have that $V_T^{(n)}\subset V_T^{(n-1)},$ then for $j=0,\ldots, n-1$ 
$$
T^{k_j}A^{-j}(V_T^{(n)})\subset T^{k_j}A^{-j}(V_T^{(n-1)}),
$$
using the inductive hypothesis we obtain that 
$$
T^{k_j}A^{-j}(V_T^{(n)})\subset V_T,
$$
similarly for $V_S^{(n)}$ (replacing $T$ by $S$). This concludes the proof.
\end{proof}

Now we proceed to the prove of Theorem \ref{teo1}.

\begin{proof}[Proof of Theorem \ref{teo1}]
We use the Theorem \ref{teo2} to charactetise $\Delta$-transitivity. Let $U,V_T,U_S$ nonempty open subsets of $X.$ We use the strong transitivity of $A$ to find $M$ such that $\cup_{i=0}^M A^i U=X.$ We use proposition 4 in \cite{Moothathu_diagonal} to obtain that $T\times S$ is transitive. We use Lemma \ref{lemma1} to find  $\{k_j\}_{j=0}^M,$  $V_T^{(M)}$ and $V_S^{(M)}.$ We use the transitivity of $T^{-1}S$ to obtain $n\in \N$ such that $(T^{-1}S)^n(V_T^{(M)})\cap V_S^{(M)}\neq \emptyset.$ We fix an $a\in V_T^{(M)}$ and $b\in V_S^{(M)}$ such that $(T^{-1}S)^n a=b.$ We find a $y\in X$ such that $T^{n}y=a.$ We use strong transitivity of $A$ to find an $l\in \{0,\ldots, M\}$ and $x\in U$ such that $A^l x=y.$ We use Lemma \ref{lemma1} to prove that $T^{k_l}A^{-l}a\in V_T$ and $S^{k_l}A^{-l}b\in V_S.$ By construction, we have $x\in U$ such that
$$T^l A^{-l}  T^n A^l x\in V_T$$
and 
$$T^l A^{-l} (T^{-1}S)^n  T^n A^l x\in V_S.$$
By the hypotheses that $A$ and $T$ are homeomorphism, $A$ commutes with $T$ and $S,$ and that $ST^{-1}=T^{-1}S,$ we conclude the result.
\end{proof}

We find other sufficient condition for $\Delta$-transitivity motivated by the following property that appears without name in \cite{Moothathu_diagonal}.

\begin{definition}
We say that a dynamical systems $(X,T)$ satisfies the density condition if for any non empty open subsets $U, V \subset X$, there exists a nonempty open set $W \subset V$ and $n_0 \in \NN$ such that $W \subset T^n(U)$ for any $n \geq n_0$.
\end{definition}

\begin{proposition}\label{prop3}
Suppose that $(X,T)$ is a homeomorphism that satisfies the density condition and $T^{-1}S=ST^{-1}$ is syndetically transitive. Then $T\times S$ is $\Delta$-transitive.
\end{proposition}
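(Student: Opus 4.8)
The plan is to verify condition (1) of Theorem \ref{teo2} in the case $H=2$, $T_1=T$, $T_2=S$: given nonempty open sets $U_0,U_1,U_2\subset X$, I must produce $n\in\NN$ with $U_0\cap T^{-n}U_1\cap S^{-n}U_2\neq\emptyset$. The first move is purely algebraic. The relation $T^{-1}S=ST^{-1}$ forces $TS=ST$, so $T$ and $S$ commute; writing $R:=T^{-1}S$ we then have $R^n=T^{-n}S^n$ and hence $S^n=T^nR^n=R^nT^n$ for every $n\geq 0$.

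Next I would use that $T$ is a homeomorphism to transfer the problem onto the image side. For a fixed $n$, the substitution $y=T^nx$ is a homeomorphism of $U_0$ onto the open set $T^n(U_0)$, and under it $S^nx=R^nT^nx=R^ny$; therefore $U_0\cap T^{-n}U_1\cap S^{-n}U_2\neq\emptyset$ if and only if $T^n(U_0)\cap U_1\cap R^{-n}U_2\neq\emptyset$. So it suffices to find a single $n$ making this last set nonempty.

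Now the two hypotheses combine cleanly. Applying the density condition of $(X,T)$ to the pair $(U_0,U_1)$ yields a nonempty open set $W\subset U_1$ and an $n_0\in\NN$ such that $W\subset T^n(U_0)$, and hence $W\subset T^n(U_0)\cap U_1$, for all $n\geq n_0$. Applying the syndetic transitivity of $R=T^{-1}S$ to the nonempty open pair $(W,U_2)$, the set $\{n\in\NN : W\cap R^{-n}U_2\neq\emptyset\}$ is syndetic, in particular unbounded, so I may pick $n$ in it with $n\geq n_0$. For this $n$ we obtain $T^n(U_0)\cap U_1\cap R^{-n}U_2\supset W\cap R^{-n}U_2\neq\emptyset$, which is exactly what was needed; Theorem \ref{teo2} then gives $\Delta$-transitivity.

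As for the difficulty: each individual step is short, and the real content is only in spotting the right reformulation (conjugating by $T^n$ so that the density condition stabilizes $T^n(U_0)$ and the $R$-dynamics produce the remaining intersection). The one point that must be handled with a little care is matching the ``all large $n$'' coming from the density condition with the ``return time $n$'' coming from transitivity of $R$; this is precisely why syndetic (rather than merely topological) transitivity is assumed, since a syndetic subset of $\NN$ is necessarily infinite and so meets $[n_0,\infty)$. I do not anticipate any serious obstacle beyond this bookkeeping.
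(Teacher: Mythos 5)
Your proof is correct and follows essentially the same route as the paper's: verify condition (1) of Theorem \ref{teo2} by applying the density condition to $(U_0,U_1)$ to get $W\subset U_1$ with $W\subset T^n(U_0)$ for all large $n$, then use syndetic transitivity of $T^{-1}S$ to pick a suitable $n$ beyond that threshold, with the commutation $TS=ST$ and invertibility of $T$ converting the intersection back to the required form. Your conjugation-by-$T^n$ reformulation is just a cleaner packaging of the paper's explicit choice of points $a\in W$, $b\in V_S$, $x=T^{-n}a\in U$.
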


\begin{rem} The proposition is still true if $T$ is not necessarily an homeomorphism, but $S=T^{k}\tilde{S}$ for some $k\geq 1,$ $\tilde{S}:X\to X$ is a continuous map that commutes with $T,$ and $T^{k-1}\tilde{S}$ is syndetically transitive.
\end{rem}

\begin{proof}
We use the Theorem \ref{teo2} to charactetise $\Delta$ - transitivity. Let $U,V_T,U_S$ nonempty open subsets of $X.$ We use the density condition to find $M$ and $W\subset V_T $ such that $W\subset T^n(U)$ for $n\geq M.$ We use the syndetic transitivity of $T^{-1}S$ to obtain $n\geq M$ such that $(T^{-1}S)^n(W)\cap V_S\neq \emptyset.$ We fix an $a\in W$ and $b\in V_S$ such that $(T^{-1}S)^n a=b.$ We find $x\in U$ such that $T^{n}x=a.$ By construction, we have $x\in U$ such that
$$T^n x\in V_T$$
and 
$$T^n (T^{-1}S)^n x\in V_S.$$
By the hypotheses that $ST^{-1}=T^{-1}S,$ we conclude the result.
\end{proof}

\subsection{Examples}

The following examples are easy to prove.

\begin{example}
Let $(\Sigma,\sigma)$ be a mixing subshift of finite type and $(p,q)$ a pair of different positive integers. If $T=\sigma^{p}$ and $S=\sigma^{q}.$ Then $T\times S$ is $\Delta$-transitive.
\end{example}

\begin{example}
Let $X=[0,1],$ $f:X\to X$ be a continuous and mixing map, and $(p,q)$ a pair of different positive integers. If $T=f^{p}$ and $S=f^{q}.$ Then $T\times S$ is $\Delta$-transitive.
\end{example}

\begin{example}
Let  $X=\{1,2,\ldots,N\}^{\N},$ $(X,F)$ be a topologically mixing cellular automata with anticipation $a$ and $\sigma:X\to X$ be the shift. If $b>a,$ $T=\sigma^b$ and $S=F.$ Then $T\times S$ is $\triangle$-transitive.
\end{example}

\section{ $\Delta$-transitivity II }\label{sec2}

Let $X$ be a compact metric space and $T_1,T_2,\ldots,T_H:X\to X$ be continuous maps for $H\geq 2.$ In this section we characterise topologically $\Delta$-transitivity for the product transformation $$T_1\times T_2\times \cdots \times T_H:X^{H}\to X^{H}.$$
The main theorem of this section is the following.

\begin{theorem}\label{teo4}
Suppose that $T_1,T_2,\ldots,T_H$ are weakly mixing and syndetically transitive, $T_1,\ldots, T_{H-1}$ are homeomorphism, $T_{i}^{-1}T_{i+1}=T_{i+1}T_{i}^{-1}$ satisfies the density condition for $i=1,\ldots,H-2$ and $T_{H-1}^{-1}T_{H}=T_{H}T_{H-1}^{-1}$ is transitive. If there exists a homeomorphism $A:X\to X,$ strongly transitive and that commutes with $T_i$ for every $i=1,2,\ldots,H.$ Then $T_1\times T_2\times \cdots \times T_H$ is $\Delta$-transitive. 
\end{theorem}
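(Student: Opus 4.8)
The plan is to follow the proof of Theorem~\ref{teo1} closely, the new feature being a chain of $H-2$ ``middle'' maps $T_i^{-1}T_{i+1}$ handled by the density condition instead of plain transitivity. By Theorem~\ref{teo2} it suffices to verify condition~(1): given arbitrary nonempty open sets $U_0,U_1,\dots,U_H\subset X$, produce an integer $m$ and a point $x\in U_0$ with $T_i^m x\in U_i$ for every $i=1,\dots,H$. Three preparatory ingredients are needed. First, strong transitivity of $A$ gives $M\in\NN$ with $\bigcup_{j=0}^M A^j U_0=X$. Second, since each $T_i$ is weakly mixing and syndetically transitive, the natural $H$-fold extension of \cite[Proposition~4]{Moothathu_diagonal} shows that $T_1\times\cdots\times T_H$ is topologically transitive. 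Third, the $H$-fold version of Lemma~\ref{lemma1} --- whose proof is word for word that of Lemma~\ref{lemma1} with $X^2$ replaced by $X^H$ and the transitivity of $T\times S$ replaced by that of $T_1\times\cdots\times T_H$ --- produces integers $k_0,\dots,k_M$ and nested nonempty open sets $V_i^{(M)}\subset U_i$ such that $T_i^{k_j}A^{-j}(V_i^{(M)})\subset U_i$ for all $0\le j\le M$ and $1\le i\le H$.

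Next I would set up the chain. Since $T_1,\dots,T_{H-1}$ are homeomorphisms, the maps $R_i:=T_i^{-1}T_{i+1}=T_{i+1}T_i^{-1}$ are defined; the relation $T_i^{-1}T_{i+1}=T_{i+1}T_i^{-1}$ forces $T_iT_{i+1}=T_{i+1}T_i$, hence $R_i^n=T_{i+1}^nT_i^{-n}$. Put $W_1:=V_1^{(M)}$. For $i=1,\dots,H-2$, apply the density condition of $R_i$ to the pair $(W_i,V_{i+1}^{(M)})$ to obtain a nonempty open $W_{i+1}\subset V_{i+1}^{(M)}$ and $n_i\in\NN$ with $W_{i+1}\subset R_i^n(W_i)$ for every $n\ge n_i$. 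Then, using transitivity of $R_{H-1}$ (which lets $n$ be taken arbitrarily large), choose $n\ge\max_i n_i$ with $R_{H-1}^n(W_{H-1})\cap V_H^{(M)}\neq\emptyset$. Unwinding these inclusions with this single $n$ produces $a_1\in V_1^{(M)}$ and points $a_{i+1}:=R_i^n a_i$ lying in $W_{i+1}\subset V_{i+1}^{(M)}$ for $i=1,\dots,H-2$, together with $a_H:=R_{H-1}^n a_{H-1}\in V_H^{(M)}$. The point I would stress here is that the \emph{same} exponent $n$ serves every link of the chain; this is exactly what the density condition provides, and is why plain transitivity does not suffice for the $H-2$ middle maps.

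To close the loop I would set $y:=T_1^{-n}a_1$ and check by induction on $i$, using $R_i^n=T_{i+1}^nT_i^{-n}$, that $T_i^n y=a_i$ for every $i$. Strong transitivity of $A$ gives $l\in\{0,\dots,M\}$ and $x\in U_0$ with $y=A^l x$. Because $A$ commutes with each $T_i$, one has $A^{-l}T_i^nA^l=T_i^n$, so $T_i^{k_l}A^{-l}a_i=T_i^{k_l}A^{-l}T_i^nA^l x=T_i^{k_l+n}x$; on the other hand $a_i\in V_i^{(M)}$ and $l\le M$, so the $H$-fold Lemma~\ref{lemma1} gives $T_i^{k_l}A^{-l}a_i\in U_i$. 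Hence $x\in U_0$ and $T_i^{k_l+n}x\in U_i$ for all $i=1,\dots,H$, which is condition~(1) of Theorem~\ref{teo2} with $m=k_l+n$; by that theorem $T_1\times\cdots\times T_H$ is $\Delta$-transitive.

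I expect the main obstacle to be the transitivity of the $H$-fold product $T_1\times\cdots\times T_H$: this is the step that genuinely uses weak mixing and syndetic transitivity of \emph{all} of $T_1,\dots,T_H$, via the fact that the sets $N_{T_i}(U,V)=\{n:T_i^nU\cap V\neq\emptyset\}$ are thick (from weak mixing) and syndetic (from syndetic transitivity), together with the fact that such a family has nonempty common intersection --- which for $H>2$ needs either an inductive argument over the factors or a thickly-syndetic refinement, rather than the one-line thick-meets-syndetic argument available when $H=2$. Everything else is bookkeeping entirely parallel to Theorem~\ref{teo1}, the single new idea being the role of the density condition in making the chain close up at one common time $n$.
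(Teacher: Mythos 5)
Your proposal is correct and follows essentially the same route as the paper's proof: reduce via Theorem~\ref{teo2}, get $M$ from strong transitivity of $A$, invoke the $H$-fold generalizations of Moothathu's Proposition~4 and of Lemma~\ref{lemma1}, build the chain $W_2,\dots,W_{H-1}$ with the density condition of the maps $T_i^{-1}T_{i+1}$, close it at a single time $n$ using transitivity of $T_{H-1}^{-1}T_H$, pull back by $T_1^{-n}$ and $A^l$, and finish with the commutation relations. In fact you make explicit several steps the paper leaves implicit (the identity $R_i^n=T_{i+1}^nT_i^{-n}$, the induction giving $T_i^ny=a_i$, and the final computation $T_i^{k_l}A^{-l}a_i=T_i^{k_l+n}x$), including the caveat about how the product's transitivity for $H>2$ rests on the thickly syndetic refinement of the $H=2$ argument.
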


Immediately we obtain the following corollary.

\begin{corollary}\label{corteo4}
Suppose that $T_1,T_2,\ldots,T_H$ are minimal homeomorphisms, weakly mixing and commuting such that $T_{i+1}T_{i}^{-1}$ satisfies the density condition for $i=1,\ldots,H-2$ and $T_{H-1}^{-1}T_{H}=T_{H}T_{H-1}^{-1}$ is transitive. Then $T_1\times T_2\times \cdots \times T_H$ is $\Delta$-transitive.
\end{corollary}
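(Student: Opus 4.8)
The plan is to obtain the corollary as a direct application of Theorem~\ref{teo4}, taking the auxiliary homeomorphism to be $A := T_1$. Most of the work is simply matching hypotheses. Since $T_i$ and $T_{i+1}$ commute, conjugating $T_iT_{i+1}=T_{i+1}T_i$ by $T_i^{-1}$ gives $T_i^{-1}T_{i+1}=T_{i+1}T_i^{-1}$ as maps, so the assumed density condition for $T_{i+1}T_i^{-1}$ $(1\le i\le H-2)$ is literally the density-condition hypothesis of Theorem~\ref{teo4}, and likewise the assumed transitivity of $T_{H-1}^{-1}T_H=T_HT_{H-1}^{-1}$ is exactly what is required. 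Weak mixing of each $T_i$ is assumed, all the $T_i$ (in particular $T_1,\ldots,T_{H-1}$) are homeomorphisms, and $A=T_1$ is a homeomorphism commuting with every $T_i$ by hypothesis. Thus the only two facts left to establish are that each $T_i$ is syndetically transitive and that $A=T_1$ is strongly transitive; I would derive both from minimality together with compactness of $X$.

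For syndetic transitivity I would first record that a minimal homeomorphism of a compact metric space is forward minimal: given $x$, the set $A_x:=\overline{\{T_i^nx:n\ge0\}}$ satisfies $T_iA_x\subseteq A_x$, so the sets $T_i^nA_x$ decrease, and $\bigcap_{n\ge0}T_i^nA_x$ is nonempty, closed and satisfies $T_i\bigl(\bigcap_{n\ge0}T_i^nA_x\bigr)=\bigcap_{n\ge0}T_i^nA_x$; by minimality it equals $X$, forcing $A_x=X$. Consequently $\bigcup_{n\ge0}T_i^{-n}U=X$ for every nonempty open $U$, and compactness gives $\bigcup_{n=0}^{N}T_i^{-n}U=X$ for some $N$; applying $T_i^k$ to an arbitrary point shows its visiting set to $U$ has gaps at most $N$, hence is syndetic, and in particular $\{n:T_i^nV\cap U\neq\emptyset\}$ is syndetic for all nonempty open $U,V$. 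Applying the same forward-minimality argument to the minimal homeomorphism $A^{-1}=T_1^{-1}$ gives $\bigcup_{i\ge0}A^iU=X$, and compactness once more yields $\bigcup_{i=0}^{M}A^iU=X$ for some $M$, which is the strong transitivity of $A$.

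With all hypotheses of Theorem~\ref{teo4} verified, I would conclude that $T_1\times T_2\times\cdots\times T_H$ is $\Delta$-transitive. The only step requiring any attention is the passage from minimality (in the two-sided sense appropriate for homeomorphisms) to forward and backward minimality; everything else is bookkeeping of hypotheses and a routine compactness argument, and if one reads ``minimal'' as ``forward minimal'' from the outset even that step becomes immediate.
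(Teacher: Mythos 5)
Your proposal is correct and follows exactly the route the paper intends: the corollary is stated as an immediate consequence of Theorem~\ref{teo4}, with the only implicit content being that commuting makes $T_i^{-1}T_{i+1}=T_{i+1}T_i^{-1}$, and that a minimal homeomorphism of a compact metric space is syndetically transitive and (taking $A=T_1$) strongly transitive, which you verify correctly via forward minimality and compactness. Your write-up simply makes explicit the standard facts the paper leaves unstated, so there is nothing to add.
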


\begin{proof}[Proof of Theorem \ref{teo4}]
We use Theorem \ref{teo2} to characterise $\Delta$-transitivity. Let $U,V_{T_1},$ $V_{T_2},$ $\ldots,$ $V_{T_H}$ be nonempty open subsets of $X.$ We use the strong transitivity of $A$ to find $M$ such that $\cup_{i=0}^M A^i U=X.$ We use a direct generalization of proposition 4 in \cite{Moothathu_diagonal} to obtain that $T_1\times T_2\times \cdots \times T_H$ is transitive. We use a direct generalisation of Lemma \ref{lemma1} to find $\{k_j\}_{j=0}^M,$  $V_{T_1}^{(M)}, \ldots, V_{T_H}^{(M)}$ with the properties that 
$$
V_{T_i}^{(M)}\subset V_{T_i}
$$
and
$$
T_i^{k_j}A^{-j}V_{T_i}^{(M)}\subset V_{T_i} \mbox{ for every }j=0,\ldots,M,
$$
for every $i=1,\ldots,H.$

We use the density condition of $T_{1}^{-1}T_2$ to find $W_2\subset V_{T_2}^{(M)}$ such that 
$W_2\subset T_1^{m} (V_{T_1}^{(M)})$ for every $m\geq M_1.$ Similarly, we find $W_3\subset V_{T_3}^{(M)}$ such that 
$W_3\subset T_2^{m} (W_2)$ for every $m\geq M_2.$ We repeat the argument up to finding $W_{H-1}\subset V_{T_{H-1}}^{(M)}$ such that 
$W_{H-1}\subset T_{H-2}^{m} (W_{H-2})$ for every $m\geq M_{H-2}.$

We use that $T_{H-1}^{-1}T_{H}$ is transitive to find $$n\geq \max\{M_i: i=1,\ldots,H-2\}$$ such that $$(T_{H-1}^{-1}T_{H})^n W_{H-1}\cap V_{T_H}^{(M)}\neq \emptyset.$$

We find $a_H\in V_{T_H}^{(M)},$ $a_{H-1}\in W_{H-1},$ $\ldots,$  $a_2\in W_{2}$, $a_1\in V_{T_{1}}^{(M)}$ such that 
$$
(T_{1}^{-1} T_2)^n a_1= a_2, (T_{2}^{-1} T_3)^n a_2= a_3,\ldots, (T_{H-1}^{-1} T_{H})^n a_{H-1}= a_{H}.
$$

We find a $y\in X$ such that $T^{n}y=a_1.$ We use strong transitivity of $A$ to find an $l\in \{0,\ldots, M\}$ and $x\in U$ such that $A^l x=y.$ We use a direct generalization of Lemma \ref{lemma1} to prove that $T_{i}^{k_l}A^{-l}a_i\in V_{T_i}$ for every $i=1,\ldots,H.$ By construction and the assumptions on $A$ we conclude the result.
\end{proof}

\begin{rem} Observe the following.
\begin{enumerate}
\item The theorem is still true if $T_i$ are not necessarily homeomorphisms, but
$$
\begin{aligned}
T_1&=T\\
T_2&=S_1T_1^{a_1}\\
T_3&=S_2T_2^{a_2}\\
&\vdots\\
T_H&=S_{H-1}T_{H-1}^{a_{H-1}},
\end{aligned}
$$
where $a_i>1,$ $S_i S_j=S_j S_i$ and $S_i T= T S_i$ for every $i,j=1,\ldots,H.$
\item The theorem is still true if $A$ is not necessarily an homeomorphism, but $A=T$ and 
$$
\begin{aligned}
T_1&=S_1T^{a_1}\\
T_2&=S_2T^{a_2}\\
&\vdots\\
T_H&=S_{H}T^{a_{H}},
\end{aligned}
$$
where $a_i\geq 1,$ $S_i S_j=S_j S_i$ and $S_i T= T S_i$ for every $i,j=1,\ldots,H.$
\end{enumerate}
\end{rem}

We can state a lemma analog to Proposition 3 (iv) in \cite{Moothathu_diagonal} that gives an example of a product transformation that is not $\Delta$ - transitive. 

\begin{lemma}\label{non-t-mixing-lemma}
Let $t\geq 2,$ $(p_1,\ldots,p_t)\in \N^t$ with $p_i\neq p_j$ for $i\neq j$, and $T_i= T^{p_i}$ for $i=1,\ldots,t.$ There exists a shift space $X$ such that for $T$ the shift action, $T_1\times T_2\times\cdots\times T_H$ is not $\triangle$-transitive.
\end{lemma}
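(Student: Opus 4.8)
The plan is to adapt Moothathu's construction of a topologically mixing shift space whose diagonal product fails to be $\Delta$-transitive (\cite[Proposition 3]{Moothathu_diagonal}) to the present setting, where the transformations are fixed powers $T_i = T^{p_i}$ of a single shift. By Theorem \ref{teo2}, it suffices to exhibit non-empty open sets $U_0, U_1, \ldots, U_t$ in $X$ for which $U_0 \cap T_1^{-n}U_1 \cap \cdots \cap T_t^{-n}U_t = \emptyset$ for every $n$; equivalently, we want a point whose orbit under $T_1 \times \cdots \times T_t$ avoids some fixed open box in $X^t$. The idea is to build a shift space $X$ on a finite alphabet containing a distinguished symbol $a$, in which the gaps between consecutive occurrences of $a$ are constrained so that the residues modulo a suitably chosen modulus $q$ cannot be realized simultaneously.

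Concretely, first I would choose $q$ so that the $p_i$ lie in distinct residue classes modulo $q$ — for instance, take $q$ larger than $\max_i p_i$, or more carefully take $q$ coprime to all differences $p_i - p_j$ is not needed; it is enough that the map $i \mapsto p_i \bmod q$ be injective, which holds as soon as $q > \max_i p_i$. Then I would let $X$ be the subshift of $\{0,1,\ldots\}^{\NN}$ (or $\ZZ$) consisting of all sequences in which any two consecutive occurrences of the symbol $a$ are separated by a distance that is a multiple of $q$, together with enough filler words to make the system topologically mixing as a subshift — this is the standard trick: the language is large enough to be mixing, but the positions of the letter $a$ are "locked" to an arithmetic progression modulo $q$. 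Set $U_i$ to be the cylinder $[a]$ at coordinate $0$ for each $i = 0, 1, \ldots, t$. If $x \in U_0 \cap T^{-np_1}U_1 \cap \cdots \cap T^{-np_t}U_t$, then $x$ has the symbol $a$ at positions $0, np_1, np_2, \ldots, np_t$, forcing $np_i \equiv np_j \pmod q$ for all $i,j$; by the choice of $q$ this forces $n \equiv 0 \pmod q$ (using that $p_i - p_j$ is invertible mod $q$ for at least one pair, or more simply that the $np_i \bmod q$ being all equal for $i$ ranging over distinct residues forces $n$ to kill the differences). One then needs to arrange the construction so that $n \equiv 0 \pmod q$ still leads to a contradiction — the cleanest way is to additionally forbid $a$ from appearing at certain positions, e.g. require that every block of $a$'s has length exactly one and that consecutive $a$'s are separated by a distance that is $\equiv r \pmod q$ for a single fixed nonzero residue $r$, so that $a$ at position $0$ forbids $a$ at any positive multiple of $q$ entirely.

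The key steps, in order, are: (i) fix the modulus $q$ and the residue constraints; (ii) define the subshift $X$ by a finite list of forbidden words encoding the gap constraints, and verify it is topologically mixing — this requires checking the standard mixing criterion (any two admissible words can be connected by an admissible word of every sufficiently large length), which is where one must be slightly careful that the gap constraint does not obstruct mixing; (iii) choose the open sets $U_i$ as cylinders on the symbol $a$; (iv) show the intersection is empty for all $n$ using the arithmetic incompatibility of the residues $np_i \bmod q$; (v) invoke Theorem \ref{teo2} to conclude $T_1 \times \cdots \times T_t$ is not $\Delta$-transitive. I expect step (ii) — simultaneously enforcing the arithmetic gap constraint and retaining topological mixing — to be the main obstacle, since these pull in opposite directions; the resolution is to allow a rich set of filler symbols between the constrained occurrences of $a$ so that the only real restriction is on where $a$ can sit, exactly as in \cite[Proposition 3(iv)]{Moothathu_diagonal}. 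A secondary subtlety is making the residue argument in step (iv) airtight when the $p_i$ are not pairwise coprime to $q$, which is handled by choosing $q$ prime and larger than $\max_i p_i$, so that each $p_i$ is invertible modulo $q$ and the differences $p_i - p_j$ are nonzero modulo $q$.
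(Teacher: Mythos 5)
You have the right frame (choose cylinder sets on a distinguished symbol and build a subshift in which that symbol cannot sit at positions $0,np_1,\ldots,np_t$ simultaneously for any $n$, then apply Theorem \ref{teo2}), but the construction you propose does not have the property needed in your step (iv), and this is a genuine gap. A constraint that only restricts the residues mod $q$ of the gaps between consecutive occurrences of $a$ can never exclude the configuration $\{0,np_1,\ldots,np_t\}$ for \emph{all} $n$. With gaps required to be multiples of $q$ you already observed that $n\equiv 0\pmod q$ gives no contradiction; and your repair (gaps $\equiv r\pmod q$ with $r\not\equiv 0$) does not do what you assert: after $q$ consecutive gaps the position of $a$ returns to the residue class of $0$, so $a$ at position $0$ does not forbid $a$ at positive multiples of $q$ (the point with $a$ exactly at $0,r,2r,\ldots$ lies in your subshift and has $a$ at the position $qr$). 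More decisively, take $n$ to be a large multiple of $q$: then every displacement $np_1,\,n(p_2-p_1),\ldots,n(p_t-p_{t-1})$ is $\equiv 0\pmod q$, and $0$ is always realizable as a sum of allowed gap residues (use $q$ equal gaps), so for such $n$ one can place $a$ at all of $0,np_1,\ldots,np_t$ while keeping every intermediate gap in the allowed class; hence $U_0\cap T^{-np_1}U_1\cap\cdots\cap T^{-np_t}U_t\neq\emptyset$, and no choice of $q$ and $r$ closes the argument (for the $r\neq 0$ version the intersection is in fact nonempty for all large $n$, by choosing occurrence counts $m_1<\cdots<m_t$ with $m_i\equiv np_i r^{-1}\pmod q$).

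What is needed, and what the paper does, is to forbid the bad patterns directly rather than through a congruence: $X\subset\{0,1\}^{\NN_0}$ consists of the sequences containing no $11$ and no subword $1u_11u_21\cdots 1u_t1$ whose gap-length vector $(|u_1|,\ldots,|u_t|)$ equals $n(p_1,p_2-p_1,\ldots,p_t-p_{t-1})-(1,\ldots,1)$ for some $n\geq n_0$. This is an infinite family of forbidden words (your ``finite list'' would not suffice, though an infinite list is perfectly legitimate for defining a shift space), and then the single cylinder $C(1)$, used as every $U_i$, witnesses the failure of condition (1) of Theorem \ref{teo2}. Note also that the lemma as stated does not assert topological mixing of $X$, and the paper's proof never verifies any mixing; so your step (ii), which you rightly single out as the delicate point of your route, is not required for the statement --- it would only matter for the stronger Moothathu-type example in \cite{Moothathu_diagonal}, and even there the combinatorial core must first exclude the diagonal pattern for every $n$, which the modular construction cannot do.
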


\begin{proof}
We can assume that $p_1<\cdots<p_t.$ Let consider $X$ to be the subset of $\{0,1\}^{\N_0}$ such that for any $x\in X,$ $11$ does not appear in $x$ and if 
$$
1u_1 1 u_21\cdots 1 u_t 1
$$
is a subwords of $x$ for $u_1,\ldots,u_t\in \{0,1\}^{*}:=\cup_{n\in\N}\{0,1\}^n$ with $|u_i|:= \mbox{length}(u_i) =n_i$ for $i\in [t],$ we have that for $$n_0:=\min\left\{n\in\N: \min_{i\in [t)}\{n(p_{i+1}-p_{i})\}-1\geq 1\right\}$$ the vector 
$$
(n_1,\ldots,n_t)\notin \cup_{n\geq n_0}\{n(p_1,p_2-p_1,\ldots, p_t-p_{t-1})-(1,\ldots,1)\}.
$$
Clearly $X$ is $T$ invariant for $T:X\to X$ the action of the shift (that is continuous). By contradiction, assume that $T_1\times\cdots\times T_H$ is $\triangle$-transitive. Then there exists $n\in\N$ such that $ C(1) \cap_{i\in [t]} T^{-p_i n}C(1) \neq \emptyset,$ where $C(1):=\{w\in X: w_0=1\}.$ Therefore any sequence $x\in C(1) \cap_{i\in [t]} T^{-p_i n}C(1)$ is of the form $x=1u_1 1u_21\cdots 1 u_t 1\ldots ,$ where $u_1=0^{n p_1-1}$ and $u_{i+1}=0^{n (p_{i+1}- p_i)}$ for every $1\leq i <t,$ and by definition of $x\notin X,$ which is a contradiction.
\end{proof}

\section{Continuous coboundaries for the product of smooth functions}\label{sec3}

Let $(X, T)$ be a topological dynamical system, where $X$ is a compact metric space and $T: X \to X$ is a continuous map. We say a real-valued continuous function $f \in \mathcal{C}(X):= \mathcal{C}(X, \RR)$ is a \textit{coboundary} if there exists $g \in \mathcal{C}(X)$ such that $f = g - g\circ T.$ There are two well studied topological conditions under which a continuous function $f$ is a coboundary: the first are provided by Liv\v{s}ic theorem for transitive, surjective continuous maps and the second by Gottschalk-Hedlund theorem for minimal systems. We use here only Liv\v{s}ic theorem. 

\begin{theorem}[Liv\v{s}ic, \cite{Livsic_homology, Livsic_chomology} ]\label{original_Livsic_theorem}
	Let $T:X\to X$ be a transitive, surjective continuous map that satisfies the closing property (see Definition \ref{CP}). Let $f:X\to \RR$ be a $\alpha$-H\"older map (see Definition $\ref{def:alpha-Holder}$) such that for any $p$ in $X$ with $T^k p=p$ for some $k$ in $\NN,$ the sum  $\sum_{j=0}^{k-1}f(T^j p)$ is equal to zero. Then there exists $g:X\to \RR$ such that it is $\alpha$-H\"older and $g - g\circ T =f.$
\end{theorem}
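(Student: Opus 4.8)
The plan is to follow the classical Liv\v{s}ic scheme: build the transfer function $g$ explicitly along a single dense orbit, use the vanishing of the periodic Birkhoff sums together with the closing property to show that this $g$ is $\alpha$-H\"older on that orbit, and then extend it by uniform continuity to all of $X$.

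Concretely, using transitivity (and surjectivity) of $T$, first fix a point $x_0 \in X$ whose forward orbit $\mathcal O := \{T^n x_0 : n \ge 0\}$ is dense in $X$ (such a point exists by a Baire category argument). If $X$ is finite then $\mathcal O$ is a single periodic orbit and the statement follows at once by telescoping, so assume $X$ is infinite; then the points $T^n x_0$ are pairwise distinct and
\[
g(T^n x_0) := -\sum_{j=0}^{n-1} f(T^j x_0) \qquad (n \ge 0,\ g(x_0) := 0)
\]
is a well-defined function on $\mathcal O$ which by construction satisfies $g(y) - g(Ty) = f(y)$ for every $y \in \mathcal O$. It then suffices to prove that $g$ is $\alpha$-H\"older on $\mathcal O$: granting this, $g$ is uniformly continuous on the dense set $\mathcal O$, hence extends to a continuous function on the compact space $X$; this extension is bounded, and combining boundedness with the local H\"older bound shows that the extension is $\alpha$-H\"older on all of $X$. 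Finally the identity $g - g\circ T = f$ propagates from $\mathcal O$ to $X$ because both sides are continuous and $\mathcal O$ is dense.

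The crux is the H\"older estimate on $\mathcal O$. Take $m < n$ with $d(T^m x_0, T^n x_0)$ below the threshold $\delta_0$ of the closing property, and set $x := T^m x_0$, $N := n - m$, so that $d(x, T^N x) < \delta_0$. By the closing property (Definition \ref{CP}) there is a periodic point $p$ with $T^N p = p$ shadowing the segment $x, Tx, \dots, T^{N-1} x$ at a geometric rate,
\[
d(T^j x, T^j p) \le C\,\lambda^{\min\{j,\, N-j\}}\, d(x, T^N x), \qquad 0 \le j \le N,
\]
for constants $C > 0$ and $\lambda \in (0,1)$ depending only on $T$. Since $p$ is $N$-periodic, the hypothesis gives $\sum_{j=0}^{N-1} f(T^j p) = 0$, so, with $L$ the $\alpha$-H\"older constant of $f$,
\[
|g(T^n x_0) - g(T^m x_0)| = \Big| \sum_{j=0}^{N-1} f(T^j x) \Big| = \Big| \sum_{j=0}^{N-1}\big( f(T^j x) - f(T^j p)\big) \Big| \le L \sum_{j=0}^{N-1} d(T^j x, T^j p)^\alpha,
\]
and inserting the shadowing bound and summing the two-sided geometric series yields
\[
|g(T^n x_0) - g(T^m x_0)| \le \frac{2 L C^\alpha}{1 - \lambda^\alpha}\, d(T^m x_0, T^n x_0)^\alpha,
\]
with a constant independent of $m$ and $n$; this is exactly the local $\alpha$-H\"older estimate we needed.

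The main obstacle is this last step: one needs the closing property to deliver shadowing at a \emph{summable} (geometric) rate rather than mere $\varepsilon$-closeness, and one must verify that the resulting geometric series is bounded uniformly in the period $N$ — this is where the precise content of Definition \ref{CP} is used, and it is what separates the H\"older conclusion here from the merely continuous conclusion of the Gottschalk--Hedlund theorem. Everything else is routine: the well-definedness of $g$ on $\mathcal O$, the passage from a uniform local H\"older bound on a dense subset of a compact space to a bona fide $\alpha$-H\"older function on $X$, the continuity/density argument propagating $g - g\circ T = f$, and the degenerate case in which $X$ is finite.
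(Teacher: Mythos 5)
Your proof is correct, but note that the paper does not prove this statement at all: it is quoted as the classical Liv\v{s}ic theorem and attributed to \cite{Livsic_homology, Livsic_chomology} (with standard expositions available in Parry--Pollicott and in Navas--Ponce), so there is no internal argument to compare against. What you have written is a sound, self-contained version of the standard Liv\v{s}ic scheme: define $g$ by Birkhoff sums along a dense forward orbit (which exists by the Baire argument, since $X$ is compact, complete and separable), then for two orbit points $T^m x_0$, $T^n x_0$ at distance below the closing-property threshold $\delta_0$ use the periodic point $p$ with $T^{N}p=p$, $N=n-m$, the hypothesis $\sum_{j=0}^{N-1} f(T^j p)=0$, and the exponential shadowing rate $D\,d(x,T^N x)e^{-\delta\min\{j,N-j\}}$ of Definition \ref{CP} to bound $|g(T^n x_0)-g(T^m x_0)|$ by a two-sided geometric series that is summable uniformly in $N$ --- this is indeed the one step where the precise (exponential, not merely $\varepsilon$-close) form of the closing property is essential. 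The remaining steps are handled correctly: distinctness of orbit points when $X$ is infinite, the degenerate finite case by telescoping, uniform continuity on the orbit from the local H\"older bound, continuous extension to $X$, upgrading the local bound plus boundedness (automatic on a compact space) to a global $\alpha$-H\"older bound for distances $\geq \delta_0$, and propagating $g-g\circ T=f$ by density and continuity. In short, the proposal supplies a complete proof where the paper only supplies a citation.
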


In this section we apply the results on $\Delta$ - transitivity of 
$$\mathcal T:=T_1\times T_2 \times \cdots \times T_H$$ to provide a version of Liv\v{s}ic that only requires to check a condition on the diagonal subspace $\Delta := \{(x, x, \ldots, x) \in X^{H}: x \in X \}$ of $\mathcal X:= X^{H}.$

\subsection{Liv\v{s}ic Theorem for $\Delta$ - transitive products}\label{Main_theorem}

Let $(X,d)$ be a compact metric space, that we assume to be separable, complete, and without isolated points, and let $T:X\to X$ be a homeomorphism. We recall the closing property. 
\begin{definition}\label{CP} We say that a dynamical systems $(X,T)$ satisfies the \textit{closing property} if there exists $D,\delta,\delta_0>0$ such that for all $x$ in $X$ and $k$ in $\NN$ with $d(x,T^k x)<\delta_0$ there exists $p$ in $X$ such that $T^k p=p$ and such that $$d(T^i x, T^i p)\leq D d(x,T^k x)e^{-\delta \min\{i,k-i\}} \mbox{ for all } 0\leq i \leq k.$$
\end{definition}
This conditions (and the density condition) are satisfied by many well studied dynamical systems, for example, subshifts of finite type and hyperbolic diffeomorphisms on compact manifolds.\\

We define the space of $\alpha$-H\"older functions on $X.$

\begin{definition}[$\alpha$-H\"older]\label{def:alpha-Holder}
 We say that a function $f:X\to \RR$ is \textit{$\alpha$-H\"older} for $\alpha\in(0,1]$ if there exists a constant $C>0$ such that for every pair of points $x,y$ in $X,$
 $$
 |f(x)-f(y)|\leq C d (x,y)^{\alpha}.
 $$
\end{definition}

Given $H$ in $\NN$ and the continuous maps $T_i:X\to X$ for $1\leq i \leq H,$ we obtain a Liv\v{s}ic Theorem for multiple ergodic averages, that is, we consider the metric space $(\mathcal X:=X^{H},d_H)$ with $$d_H(x,y):=\max_{1\leq i  \leq H}d(x_i,y_i),$$ and the continuous map $$\mathcal T:=T_1\times T_2 \times \cdots \times T_H : \mathcal X \to \mathcal X.$$

Under the assumption that $\mathcal T$ is $\Delta$ - transitive, one can use Theorem \ref{original_Livsic_theorem} to obtain the following nonconventional Liv\v{s}ic theorem.

\begin{theorem}[Nonconventional Liv\v{s}ic]\label{Nonconventional_Livsic_theorem} Let $f:\mathcal X\to \RR$ be an $\alpha$-H\"older map. If $\mathcal T$ is $\Delta$ - transitive and satisfies the closing property.
Then the following statements are equivalent.
	\begin{enumerate}[(i)]
		\item \label{thm_one}There exists a non empty open subset $U\subset X$ such that $$\sup_{n} \sup_{x\in U} \left| \sum_{j=0}^{n} f({\mathcal T}^{j} \hat{x}) \right| < \infty \text{ where } \hat{x} := (x, x, \ldots, x)\in\Delta.$$ 		
		\item  \label{thm_two} For any $p \in \mathcal X$ such that $T^k p = p$, we have
		\[\sum_{j=0}^{k-1} f({\mathcal T}^j p) = 0. \]
		\item  \label{thm_three} There exists $V:\mathcal X\to \RR$ such that it is $\alpha$-H\"older and $V - V \circ {\mathcal T}=f.$
		\item  \label{thm_fourth} There exists $D > 0$ such that
		\[\sup_{n} \sup_{x \in \mathcal X} \left| \sum_{j=0}^{n} f({\mathcal T}^j x)  \right| < D. \]
	\end{enumerate}
\end{theorem}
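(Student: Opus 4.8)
The strategy is to prove the cycle of implications $(\ref{thm_two}) \Rightarrow (\ref{thm_three}) \Rightarrow (\ref{thm_fourth}) \Rightarrow (\ref{thm_one}) \Rightarrow (\ref{thm_two})$, which closes the loop with the least effort. The implications $(\ref{thm_three}) \Rightarrow (\ref{thm_fourth}) \Rightarrow (\ref{thm_one})$ are essentially formal: if $f = V - V\circ\mathcal T$ with $V$ continuous (hence bounded, as $\mathcal X$ is compact), then the sum $\sum_{j=0}^n f(\mathcal T^j x)$ telescopes to $V(x) - V(\mathcal T^{n+1}x)$, which is bounded by $2\norm{V}_\infty$ uniformly in $n$ and $x$; this gives $(\ref{thm_fourth})$ with $D = 2\norm{V}_\infty + 1$, and $(\ref{thm_fourth}) \Rightarrow (\ref{thm_one})$ is immediate by taking $U = X$ (using that $\hat x \in \mathcal X$). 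For $(\ref{thm_two}) \Rightarrow (\ref{thm_three})$, I would apply the classical Liv\v{s}ic theorem (Theorem \ref{original_Livsic_theorem}) directly to the system $(\mathcal X, \mathcal T)$ and the $\alpha$-H\"older function $f$: the hypotheses grant that $\mathcal T$ satisfies the closing property, and $\Delta$-transitivity of $\mathcal T$ gives transitivity of $\mathcal T$ on $\mathcal X$; surjectivity would need to be checked or arranged (since $\mathcal T$ transitive on a space without isolated points forces $\overline{\mathcal T(\mathcal X)} = \mathcal X$, and with compactness plus the closing-property setup one expects surjectivity, but this is the one routine hypothesis-matching point to verify). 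The periodic-orbit obstruction vanishing is exactly condition $(\ref{thm_two})$, so Liv\v{s}ic yields the $\alpha$-H\"older transfer function $V$ with $V - V\circ\mathcal T = f$.

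The substantive implication — and the whole point of invoking $\Delta$-transitivity — is $(\ref{thm_one}) \Rightarrow (\ref{thm_two})$. Here is where I would work hardest. Fix a periodic point $p \in \mathcal X$ with $\mathcal T^k p = p$, and write $S := \sum_{j=0}^{k-1} f(\mathcal T^j p)$; the goal is $S = 0$. The idea is to exploit that the partial sums along the orbit of $p$ grow linearly: $\sum_{j=0}^{mk-1} f(\mathcal T^j p) = mS$ for all $m \in \NN$. If $S \neq 0$, these are unbounded. Now use $\Delta$-transitivity: there is a point $\hat x_0 = (x_0,\ldots,x_0) \in \Delta$ whose $\mathcal T$-orbit is dense in $\mathcal X$, and moreover by Theorem \ref{teo2} the set of such points is a dense $G_\delta$, so we may choose $x_0 \in U$ with $\hat x_0$ having dense orbit. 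One then approximates the periodic point $p$ by a point $\mathcal T^{N}\hat x_0$ on this dense orbit, and uses the $\alpha$-H\"older continuity of $f$ together with a shadowing/closing-property estimate to compare $\sum_{j=0}^{mk-1} f(\mathcal T^{N+j}\hat x_0)$ with $mS$, controlling the error by the distance $d_H(\mathcal T^N\hat x_0, p)$ amplified along at most $mk$ steps. The delicate balancing is the standard Liv\v{s}ic-type argument: one must make $m$ large (to see the linear growth $mS$) while keeping the orbit of $\mathcal T^N\hat x_0$ close enough to that of $p$ over the window of length $mk$; the closing property's exponential factor $e^{-\delta\min\{i,k-i\}}$ is what makes the geometric-series error bound summable and uniform in $m$. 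Combined with $(\ref{thm_one})$'s uniform bound on $\left|\sum_{j=0}^{n} f(\mathcal T^j \hat x_0)\right|$ for $x_0 \in U$ — which bounds the difference of two such partial sums along the orbit of $\hat x_0$, hence bounds something close to $mS$ — one forces $S = 0$.

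The main obstacle, as anticipated, is making the approximation in $(\ref{thm_one}) \Rightarrow (\ref{thm_two})$ rigorous: one needs a point of the dense orbit that stays near the periodic orbit of $p$ for a controllably long time, which is not automatic from topological transitivity alone but follows from combining density of the orbit with the closing property (to produce genuine periodic shadowing) — and indeed this is precisely the mechanism in the proof of the classical Liv\v{s}ic theorem, adapted to the product system. A secondary care point is that condition $(\ref{thm_one})$ only asserts a bound over the diagonal $\Delta$ (and only over $\hat x$ for $x \in U$), not over all of $\mathcal X$; so the argument must route every comparison through orbits of diagonal points with dense $\mathcal T$-orbit, which is exactly what $\Delta$-transitivity supplies. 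Once $(\ref{thm_two})$ is established the rest of the cycle is soft, and in particular $(\ref{thm_fourth})$ upgrades the a priori weak hypothesis $(\ref{thm_one})$ to a bound over all of $\mathcal X$, as claimed.
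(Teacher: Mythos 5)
Your overall architecture is right, and it is essentially the intended one: the paper in fact gives no written proof of this theorem (it only remarks that it follows from Theorem \ref{original_Livsic_theorem}), and your cycle (ii)$\Rightarrow$(iii)$\Rightarrow$(iv)$\Rightarrow$(i)$\Rightarrow$(ii) with the soft telescoping steps, the application of the classical Liv\v{s}ic theorem to $(\mathcal X,\mathcal T)$, and the surjectivity check (closedness of $\mathcal T(\mathcal X)$ by compactness plus the standing assumption that $X$ has no isolated points, so the tail of the dense orbit is still dense) is exactly what the authors have in mind. Your use of Theorem \ref{teo2} to find $x_0\in U$ whose diagonal orbit is dense is also the right move; just note that this uses the $G_\delta$/open-set characterization (items (1)--(2) of Theorem \ref{teo2}), which is how that theorem defines $\Delta$-transitivity, rather than the weaker ``$E_\Delta\neq\emptyset$'' of the introduction (for which one would additionally need the commutation hypothesis of item (3)).

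The one genuine flaw is the mechanism you describe for (i)$\Rightarrow$(ii). The closing property runs in the opposite direction from how you use it: it produces a periodic point shadowing an orbit segment that nearly returns to its starting point; it does \emph{not} assert that a point close to a periodic point $p$ has its forward orbit stay close to the orbit of $p$ over a long window, let alone with an exponential bound ``uniform in $m$''. So the step ``keeping the orbit of $\mathcal T^{N}\hat x_0$ close to that of $p$ over the window of length $mk$, with the closing property's factor $e^{-\delta\min\{i,k-i\}}$ making the error summable and uniform in $m$'' is unjustified and, as written, would fail. Fortunately no uniformity in $m$ is needed, and neither is the closing property nor H\"older regularity at this point: fix $m$ first; by uniform continuity of the finitely many maps $\mathcal T^{j}$, $0\leq j\leq mk$, and of $f$, choose $\varepsilon>0$ so that $d_H(y,p)<\varepsilon$ implies $\bigl|f(\mathcal T^{j}y)-f(\mathcal T^{j}p)\bigr|<\tfrac{1}{mk}$ for all such $j$; by density of the orbit of $\hat x_0$ pick $N$ with $d_H(\mathcal T^{N}\hat x_0,p)<\varepsilon$. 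Then $\bigl|\sum_{j=0}^{mk-1}f(\mathcal T^{N+j}\hat x_0)-mS\bigr|\leq 1$, while that sum is the difference of two partial sums along the orbit of $\hat x_0$ and hence bounded by $2K$ with $K$ the bound from (i). Thus $m|S|\leq 2K+1$ for every $m$, forcing $S=0$. With this repair your argument is complete; the closing property and the H\"older hypothesis enter only through the classical Liv\v{s}ic theorem in (ii)$\Rightarrow$(iii), where the exponential factor you mention actually does its work.
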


\begin{rem}
We note that statement (i) only requires one to check the boundedness of the sums along $\Delta$, as opposed to the entire space $\mathcal X,$ that would be the case of applying directly Liv\v{s}ic Theorem to the map $\mathcal T : \mathcal X \to \mathcal X. $ This may be an easier property to check than the others, and leaves a possibility for a simulation to find if such open set $U$ exists.\\
\end{rem}

We notice that the theorem can be written in terms of $(X,T_1,\ldots,T_H)$ instead of $(\mathcal X,\mathcal T).$ Indeed, Theorem \ref{teo4} gives sufficient conditions on $T_1,\ldots,T_H$ such that $\mathcal T$ is $\Delta$ - transitive It is also possible to prove that  $\mathcal T$ satisfies the closing property if each $T_i$ satisfies it for every $i=1,\ldots, H.$ Indeed, we have the following lemma.

\begin{lemma}\label{lemma_product_CP}
If $(X, T_i)$ satisfies the closing property for every $1\leq i \leq H.$ Then $({\mathcal X}, {\mathcal T}=T_1\times \cdots\times T_H)$ also satisfies the closing property.
\end{lemma}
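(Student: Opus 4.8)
The plan is to show that the constants $D, \delta, \delta_0$ for the product system $(\mathcal X, \mathcal T)$ can be built directly from the constants for the factors. Suppose $(X, T_i)$ satisfies the closing property with constants $D_i, \delta_i, \delta_{0,i} > 0$ for each $1 \leq i \leq H$. The natural guess is to take
\[
D := \max_{1 \leq i \leq H} D_i, \qquad \delta := \min_{1 \leq i \leq H} \delta_i, \qquad \delta_0 := \min_{1 \leq i \leq H} \delta_{0,i},
\]
and verify the definition with respect to the metric $d_H(x,y) = \max_{1 \leq i \leq H} d(x_i, y_i)$ on $\mathcal X = X^H$.

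First I would fix $x = (x_1, \ldots, x_H) \in \mathcal X$ and $k \in \NN$ with $d_H(x, \mathcal T^k x) < \delta_0$. By definition of $d_H$ and the choice of $\delta_0$, this gives $d(x_i, T_i^k x_i) < \delta_0 \leq \delta_{0,i}$ for every $i$. Applying the closing property of $(X, T_i)$ coordinatewise, I obtain points $p_i \in X$ with $T_i^k p_i = p_i$ and
\[
d(T_i^j x_i, T_i^j p_i) \leq D_i\, d(x_i, T_i^k x_i)\, e^{-\delta_i \min\{j, k-j\}} \quad \text{for all } 0 \leq j \leq k.
\]
Then I set $p := (p_1, \ldots, p_H) \in \mathcal X$; clearly $\mathcal T^k p = p$. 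To bound $d_H(\mathcal T^j x, \mathcal T^j p)$ I take the maximum over $i$ of the right-hand sides above, using $D_i \leq D$, $e^{-\delta_i \min\{j,k-j\}} \leq e^{-\delta \min\{j,k-j\}}$, and $d(x_i, T_i^k x_i) \leq d_H(x, \mathcal T^k x)$ for each $i$, which yields
\[
d_H(\mathcal T^j x, \mathcal T^j p) \leq D\, d_H(x, \mathcal T^k x)\, e^{-\delta \min\{j, k-j\}} \quad \text{for all } 0 \leq j \leq k,
\]
as required.

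This proof is essentially a routine unwinding of definitions, so I do not anticipate a genuine obstacle; the only point requiring a little care is making sure the inequalities are aggregated in the correct direction — that each factor's bound can be dominated using the worst-case constants, and that $d(x_i, T_i^k x_i) \leq d_H(x, \mathcal T^k x)$ so the per-coordinate inputs are controlled by the product-metric input. One should also note that $X$ having no isolated points (assumed for the factor systems) passes to $\mathcal X$, and that $\mathcal T$ is a homeomorphism when each $T_i$ is, so the hypotheses under which the closing property is stated remain meaningful for the product. No new macros are needed beyond those already defined in the paper.
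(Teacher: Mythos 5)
Your proof is correct and follows essentially the same route as the paper: apply the closing property coordinatewise, take $p=(p_1,\ldots,p_H)$, and combine the estimates with $D=\max_i D_i$, $\delta=\min_i\delta_i$, $\delta_0=\min_i\delta_0(i)$, using that $d(x_i,T_i^kx_i)\leq d_H(x,\mathcal T^kx)$ under the max metric. Your write-up is in fact slightly more explicit than the paper's about how the per-coordinate bounds are aggregated.
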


\begin{proof}
Assume that $(X, T_i)$ satisfies the closing property for every $1\leq i \leq H.$ By the definition, for every $1\leq i \leq H,$ there exists $D_i,\delta_i,\delta_0(i)>0$ such that for all $x$ in $X$ and $k$ in $\NN$ with $d(x,T_i^k x)<\delta_0(i)$ there exists $p_i$ in $X$ such that $T_i^k p_i=p_i$ and such that $d(T_i^j x_i, T_i^j p_i)\leq D_i d(x_i,T_i^k x_i)e^{-\delta_i \min\{j,k-j\}}$  for all  $j=0,\ldots,k.$ Therefore, choosing $D=\max_{1\leq i \leq H}D_i,\delta=\min_{1\leq i \leq H}\delta_i $ and $ \delta_0=\min_{1\leq i \leq H}\delta_0(i),$ we obtain that for all $x=(x_1,\ldots,x_H)$ in ${\mathcal X}$ and $k$ in $\NN$ with $d_H(x,{\mathcal T}^k x)<\delta_0$ there exists $p=(p_1,\ldots,p_H)$ in ${\mathcal X}$ such that ${\mathcal T}^k p=p$ and such that $d_H({\mathcal T}^j x, {\mathcal T}^j p)\leq D d_H(x,{\mathcal T}^k x)e^{-\delta \min\{j,k-j\}}$  for all  $j=0,\ldots,k.$
\end{proof}

\subsection{Topological analog of a result by Assani}

Let consider $(\mathcal X, {\mathcal T}),$ where $\mathcal X= X^{H}$ and ${\mathcal T} = T_1 \times T_2 \times \cdots \times T_H,$ for some $H\geq 2.$ Assume that each $T_i:X\to X$ is continuous for any $i=1,\ldots,H.$

We say that $F = \otimes_{i=1}^H f_i$ defined by $\otimes_{i=1}^H f_i (x_1,\ldots, x_H)=\prod_{i=1}^H f_i (x_i)$ is a measure-theoretic coboundary if there exists a real-valued function $V$ on $\mathcal X$ such that the map $x \mapsto V \circ {\mathcal T} (x, x, \ldots, x)$ is essentially bounded and $\mu$-measurable, and $$F(x, x, \ldots, x) = V(x, x, \ldots, x) - V \circ {\mathcal T}(x, x, \ldots, x)$$ for $\mu$-a.e $x \in X$.

A natural problem is finding conditions on $(X,\mu,T_1,\ldots,T_H)$ so that $F$ is a measure-theoretic coboundary. Recently, I. Assani \cite{Assani_coboundary} obtained a necessary and sufficient condition for this case, see Theorem \ref{AssaniThm}.

\begin{rem}
	The proof uses the \textit{diagonal-orbit measure} of $\mu$ (cf. \cite[Definition 1.2]{Assani_coboundary}), which is a tool introduced in \cite{Assani_v1} to study the pointwise convergence of nonconventional ergodic averages. Diagonal-orbit measures were used to describe the behavior of the nonconventional ergodic sums and averages along the orbit of the diagonal space $\Delta \subset \mathcal X$ iterated by the map ${\mathcal T}.$
\end{rem}

	By contrast to Assani's work, we show that for a certain class of topological system $(X, T_1, T_2, \ldots, T_H)$ and under certain smoothness assumption of the functions $f_1, f_2, \ldots, f_H,$ one can find necessary and sufficient condition so that $\otimes_{i=1}^H f_i$ is a topologically smooth coboundary with respect to the map ${\mathcal T}$. This answers a question that was raised during a discussion between the second author and S. Donoso: When is the product function $\bigotimes_{i=1}^H f_i $ a continuous coboundary?
	
\begin{corollary}\label{Nonconventional_Livsic_corollary} Let $f_1,\ldots,f_H$ be $\alpha$-H\"older maps. Suppose that $(X,T_1,T_2,\ldots,T_H)$ satisfies the hypotheses of Theorem \ref{teo4}. Then the following statements are equivalent.
	\begin{enumerate}
		\item \label{N_L_corollary_1} There exists a non-empty open subset $U \subset X$ such that		
		$$\sup_{N} \sup_{x\in U}\left| \sum_{j=0}^{N} \prod_{i=1}^H f_i (T_i^{j} x) \right| < \infty.$$
		\item \label{N_L_corollary_2} The product of the function is a $\alpha$-H\"older coboundary, i.e. if ${\mathcal T}=T_1\times T_2 \times \cdots \times T_H,$ there exists $V$ $\alpha$-H\"older such that
		$$
		\bigotimes_{i=1}^H f_i = V - V\circ {\mathcal T}.
		$$
	\end{enumerate}
\end{corollary}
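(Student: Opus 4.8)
The plan is to deduce Corollary \ref{Nonconventional_Livsic_corollary} directly from Theorem \ref{Nonconventional_Livsic_theorem} by taking $f := \bigotimes_{i=1}^H f_i$ on $\mathcal X = X^H$. First I would check that this $f$ is $\alpha$-H\"older on $(\mathcal X, d_H)$: since each $f_i$ is $\alpha$-H\"older and continuous on the compact space $X$, each $f_i$ is bounded, say $\norm{f_i}_\infty \leq M_i$, and a telescoping estimate
$$
\left| \prod_{i=1}^H f_i(x_i) - \prod_{i=1}^H f_i(y_i) \right| \leq \sum_{k=1}^H \left( \prod_{i<k} |f_i(y_i)| \right) |f_k(x_k) - f_k(y_k)| \left( \prod_{i>k} |f_i(x_i)| \right)
$$
shows $|f(x) - f(y)| \leq C' \max_i d(x_i,y_i)^\alpha = C' d_H(x,y)^\alpha$ for a suitable $C'$ depending on the $M_i$ and the H\"older constants. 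Next, I would invoke Theorem \ref{teo4} (whose hypotheses are assumed) to conclude that $\mathcal T$ is $\Delta$-transitive, and Lemma \ref{lemma_product_CP} together with the standing assumption that each $(X,T_i)$ satisfies the closing property to conclude that $(\mathcal X, \mathcal T)$ satisfies the closing property. (If the corollary's hypotheses as stated do not literally include the closing property for each $T_i$, I would either add it or note that the transitivity/surjectivity needed for Liv\v{s}ic comes bundled with the closing-property assumption inherited from the preceding subsection's standing hypotheses.)

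With these verifications in hand, Theorem \ref{Nonconventional_Livsic_theorem} applies to $f = \bigotimes_{i=1}^H f_i$. The key observation is the identification of the two conditions being compared. For the diagonal point $\hat x = (x,x,\ldots,x)$ we have $\mathcal T^j \hat x = (T_1^j x, T_2^j x, \ldots, T_H^j x)$, so
$$
f(\mathcal T^j \hat x) = \prod_{i=1}^H f_i(T_i^j x),
$$
and therefore statement \eqref{N_L_corollary_1} of the corollary is exactly statement \eqref{thm_one} of Theorem \ref{Nonconventional_Livsic_theorem} (the roles of $N$ and $n$ as the summation bound coincide). Likewise, statement \eqref{N_L_corollary_2} asserts the existence of an $\alpha$-H\"older $V$ on $\mathcal X$ with $f = V - V\circ \mathcal T$, which is precisely statement \eqref{thm_three}. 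Since Theorem \ref{Nonconventional_Livsic_theorem} gives the equivalence of (i)--(iv), in particular \eqref{thm_one} $\Leftrightarrow$ \eqref{thm_three}, the corollary follows at once.

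There is essentially no hard analytic obstacle here; the corollary is a specialization plus a routine bookkeeping step. The only point requiring a modicum of care is the H\"older estimate for the tensor product $\bigotimes_{i=1}^H f_i$ with respect to the max-metric $d_H$ — one must use compactness of $X$ to get uniform bounds on the $f_i$ before the telescoping inequality yields a global H\"older constant — and the verification that all hypotheses of Theorem \ref{Nonconventional_Livsic_theorem} (namely $\Delta$-transitivity of $\mathcal T$ and the closing property for $\mathcal T$) are genuinely implied by the hypotheses placed on $(X, T_1,\ldots,T_H)$ in the statement. Once those are dispatched, I would simply write: ``apply Theorem \ref{Nonconventional_Livsic_theorem} to $f=\bigotimes_{i=1}^H f_i$ and use $f(\mathcal T^j\hat x)=\prod_{i=1}^H f_i(T_i^j x)$ to match \eqref{N_L_corollary_1} with \eqref{thm_one} and \eqref{N_L_corollary_2} with \eqref{thm_three}.''
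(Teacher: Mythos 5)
Your proposal is correct and follows essentially the same route as the paper: combine Theorem \ref{teo4} (for $\Delta$-transitivity of $\mathcal T$), the closing property for $\mathcal T$, and Theorem \ref{Nonconventional_Livsic_theorem} applied to $f=\bigotimes_{i=1}^H f_i$, with the $\alpha$-H\"older property of the tensor product supplied by the same boundedness-plus-telescoping estimate that the paper proves (by induction) as Lemma \ref{alpha-holder-product}. Your remark that the closing property for each $(X,T_i)$ must be added to (or understood as standing in) the corollary's hypotheses, and then transferred to $\mathcal T$ via Lemma \ref{lemma_product_CP}, is a point the paper glosses over and is handled correctly in your write-up.
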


The proof is direct from Theorem \ref{teo4} and \ref{Nonconventional_Livsic_corollary}, and the following lemma.

\begin{lemma}\label{alpha-holder-product}
	If $f_1,\ldots,f_H$ are $\alpha$-H\"older. Then $\bigotimes_{i=1}^H f_i $ is $\alpha$-H\"older.
\end{lemma}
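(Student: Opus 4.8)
The plan is to show that the class of $\alpha$-H\"older functions on a compact metric space is closed under products, then apply this $H$ times (or induct on $H$). First I would fix $f_1, \ldots, f_H$ that are $\alpha$-H\"older on $X$, with respective H\"older constants $C_1, \ldots, C_H$, and observe that since $X$ is compact and each $f_i$ is continuous (being H\"older), each $f_i$ is bounded: set $M_i := \sup_{x \in X} |f_i(x)| < \infty$. The product $\bigotimes_{i=1}^H f_i$ is a function on $\mathcal X = X^H$ equipped with the metric $d_H(x,y) = \max_{1 \le i \le H} d(x_i, y_i)$, so I need to estimate $\left| \prod_{i=1}^H f_i(x_i) - \prod_{i=1}^H f_i(y_i) \right|$ in terms of $d_H(x,y)^\alpha$.

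The key step is the standard telescoping identity for products: writing
$$
\prod_{i=1}^H f_i(x_i) - \prod_{i=1}^H f_i(y_i) = \sum_{k=1}^H \left( \prod_{i<k} f_i(y_i) \right) \bigl( f_k(x_k) - f_k(y_k) \bigr) \left( \prod_{i>k} f_i(x_i) \right),
$$
and then bounding each summand in absolute value by $\left( \prod_{i<k} M_i \right) C_k\, d(x_k,y_k)^\alpha \left( \prod_{i>k} M_i \right) \le \left( \prod_{i \ne k} M_i \right) C_k\, d_H(x,y)^\alpha$, since $d(x_k, y_k) \le d_H(x,y)$. Summing over $k$ gives the bound $\left| \bigotimes_{i=1}^H f_i(x) - \bigotimes_{i=1}^H f_i(y) \right| \le C\, d_H(x,y)^\alpha$ with $C := \sum_{k=1}^H C_k \prod_{i \ne k} M_i$, which is a finite constant. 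This establishes that $\bigotimes_{i=1}^H f_i$ is $\alpha$-H\"older on $(\mathcal X, d_H)$.

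There is no real obstacle here; the only mild care needed is the bookkeeping of the telescoping sum and the observation that boundedness of the $f_i$ is free from compactness of $X$ together with continuity of H\"older functions. One could alternatively phrase the argument inductively: if $g := \bigotimes_{i=1}^{H-1} f_i$ is $\alpha$-H\"older (and bounded) on $X^{H-1}$, then $g(x_1,\ldots,x_{H-1}) f_H(x_H)$ is $\alpha$-H\"older on $X^H$ by the two-factor case $|g(u)f_H(a) - g(v)f_H(b)| \le |g(u)||f_H(a)-f_H(b)| + |f_H(b)||g(u)-g(v)|$, again using uniform bounds. I would likely present the telescoping version since it gives the H\"older constant in one clean formula and makes the role of the $\max$-metric on $\mathcal X$ transparent.
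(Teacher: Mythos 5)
Your proof is correct. It differs from the paper's mainly in presentation: the paper argues by induction on $H$, peeling off the last factor and using exactly the two-factor estimate you mention as an alternative, namely $|f^*(x^*)f_H(x_H)-f^*(y^*)f_H(y_H)|\le |f^*(x^*)|\,|f_H(x_H)-f_H(y_H)|+|f_H(y_H)|\,|f^*(x^*)-f^*(y^*)|$, with the sup-bounds on $f_H$ and $f^*$ supplied by compactness and continuity, just as in your argument. Your telescoping identity
$$
\prod_{i=1}^H f_i(x_i)-\prod_{i=1}^H f_i(y_i)=\sum_{k=1}^H \Bigl(\prod_{i<k} f_i(y_i)\Bigr)\bigl(f_k(x_k)-f_k(y_k)\bigr)\Bigl(\prod_{i>k} f_i(x_i)\Bigr)
$$
is the non-inductive unrolling of that same estimate; what it buys is a single clean H\"older constant $C=\sum_{k=1}^H C_k\prod_{i\neq k} M_i$ in terms of the original data, rather than a constant produced recursively, and it makes the role of the max-metric $d_H$ explicit since $d(x_k,y_k)\le d_H(x,y)$ for each $k$. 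Both routes rest on the same two ingredients (boundedness from compactness plus the H\"older bound factorwise), so there is no substantive difference in content; either write-up is complete.
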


In the proof we use the following notation. Given $H \geq 2,$ $x=(x_1,x_2,\ldots,x_H)$ and $f := \bigotimes_{i=1}^H f_i ,$ we define $f^* := \bigotimes_{i=1}^{H-1} f_i $ and $x^* := (x_1,x_2,\ldots,x_{H-1}).$

\begin{proof}[Proof of Corollary \ref{Nonconventional_Livsic_corollary}.]
	We proceed by an induction on $H$. The base case $H=1$ is trivial. Assume that $f_1,\ldots,f_H$ are $\alpha$-H\"older and the property is valid for $H-1$ (i.e. $f^*$ is $\alpha$-H\"older). Let $x,y\in {\mathcal X}.$  We have that
	\begin{align*}
	|f(x)-f(y)|
	&= |f^*(x^*)f_H(x_H) - f^*(y^*)f_H(y_H)| \\
	&\leq |f^*(x^*)||f_H(x_H) - f_H(y_H)| + |f_H(y_H)||f^*(x^*) - f^*(y^*)|.
	\end{align*}
	Using that $f_H$ is  $\alpha$-H\"older we obtain that $|f_H(x_H) - f_H(y_H)|\leq C_1 d(x_H,y_H)^{\alpha}$ for some $C_1>0.$ Using that $f^*$ is $\alpha$-H\"older we obtain that $|f^*(x^*) - f^*(y^*)|\leq C_2 d_{m-1}(x^*,y^*)^{\alpha}$ for some $C_2>0.$ Using the compactness of the space $X$ (therefore also of $X^{H-1}$) and the continuity of $f_{H}$ and $f^*,$ we have that there exist constants $C_3,C_4>0$ such that $C_3:=\sup\{ |f_{H}(x)|:x\in X\}$ and $C_4:=\sup\{ |f^*(x)|:x\in X^{H-1}\}.$ Therefore, for $C:=\max\{C_4 C_1,C_3 C_2\},$ we have that
	$$
	|f(x)-f(y)|\leq C_4 C_1 d(x_H,y_H)^{\alpha}+ C_3 C_2 d_{H-1}(x^*,y^*)^{\alpha}\leq 2 C d_{H}(x,y)^{\alpha}.
	$$
	
\end{proof}


\section*{Acknowledgments}
  We are grateful for Mario Ponce, Sebasti\'an Donoso, and Godofredo Iommi for having fruitful discussions with us. We also thank Idris Assani for his comments.

\end{document}